\documentclass[12pt,a4paper]{amsart}

\usepackage{amsmath,amssymb}
\usepackage{relsize}
\usepackage{enumerate}
\usepackage{url}
\usepackage{xcolor}
\usepackage{xspace}
\usepackage[colorlinks=true]{hyperref}
\usepackage{xpicture}

\usepackage{pgfplots}
\pgfplotsset{compat=1.15}
\usepackage{mathrsfs}
\usepackage{cancel}
\usetikzlibrary{arrows}
\usepackage[normalem]{ulem}

\usepackage{cancel}

\newtheorem{theorem}{Theorem}
\newtheorem{lemma}[theorem]{Lemma}
\newtheorem{proposition}[theorem]{Proposition}

\theoremstyle{definition}
\newtheorem{definition}[theorem]{Definition}
\newtheorem{example}[theorem]{Example}

\newtheorem{remark}[theorem]{Remark}

\newcommand{\R}{\mathbb{R}}

\newcommand{\PP}{\mathbb{P}}

\def\conf{\mathcal{C}}

\title[Nef divisors at infinity]{Nef divisors of surfaces given by pencils at infinity}
\author[C. Galindo, F. Monserrat, C.-J. Moreno-\'Avila and E. P\'erez-Callejo]{Carlos Galindo, Francisco Monserrat, Carlos Jes\'us Moreno-\'Avila and Elvira P\'erez-Callejo}
\curraddr{\texttt{Carlos Galindo:} Instituto
Universitario de Matem\'aticas y Aplicaciones de Castell\'on and
Departamento de Matem\'aticas, Universitat Jaume I, Campus de Riu
Sec. 12071 Castell\'{o} (Spain)\\
\texttt{Francisco Monserrat:} Universitat Polit\`ecnica de Val\`encia, Departament de Matem\`atica Aplicada \&  Institut Universitari de Matem\`atica Pura i Aplicada, 46022
València, Spain.\\
\texttt{Carlos Jes\'us Moreno-\'Avila:} Universidad de Extremadura, Escuela Politécnica, Departamento de Matemáticas, 10003, Cáceres, Spain.\\
\texttt{Elvira P\'erez-Callejo:} Universidad de Valladolid, Facultad de Ciencias, Departamento de Álgebra, Análisis Matemático, Geometría y Topología, Valladolid, Spain.}

\email{{\rm Galindo: galindo@uji.es; {\rm Monserrat: framonde@mat.upv.es} ; {\rm Moreno-\'Avila: cjmoravi@unex.es} ; {\rm P\'erez-Callejo: elvira.perez@uva.es}}}

\urladdr{{\rm Galindo: 0000-0002-3908-4462; {\rm Monserrat: 0000-0003-2221-0140} ; {\rm Moreno-\'Avila: 0000-0002-2374-5932} ; {\rm P\'erez-Callejo: 0000-0001-5399-1221} }\color{black}}
\subjclass[2020]{}
\keywords{}

\thanks{Partially funded by MCICIU/AEI/10.13039/501100011033 and by ``ERDF, UE'', grants PID2022-138906NB-C22 and PID2022-138906NB-C21.}

\begin{document}

\begin{abstract}
We give generators for the nef cone and the cone of curves of rational surfaces obtained by blowing-up the complex projective plane at a set of points $\mathcal{B} \cup \mathcal{D}$, where $\mathcal{B}$ is the set of (proper and infinitely near) base points of a pencil associated with a curve having one place at infinity, and  $\mathcal{D}$ is a set of finitely many infinitely near free points on the strict transforms of curves of the pencil. We also prove that, when the pencil is given by an AMS-type curve and $\mathcal{D}$ contains at most two free points on any curve considered, the Cox ring of the obtained surface is finitely generated.
\end{abstract}

\maketitle


\section{Introduction}\label{se:uno}
Birational geometry is a core area of modern algebraic geometry, driving ongoing research and leading to major advances in the understanding of algebraic varieties and their properties. Cones of divisors are very useful objects in this area.

The cone of curves NE$(V)$ of a variety $V$ is one of the main tools used in the minimal model program \cite{Mor, Kaw, BirCasHacMcKer, HacMck, Casc2021}. Mori dream spaces are algebraic varieties with finitely generated  Cox rings Cox$(V)$, and they behave very well under the minimal model program \cite{HuKe}, some references are \cite{LafVel,ArtLaf,ArzDerHauLaf,HauKeiLaf,LafUga}. Other important cones include the ample cone of $V$, which allows us to study embeddings from $V$ into projective spaces, and its closure in the usual topology, the nef cone, Nef$(V)$. Computing this cone is an interesting and difficult task even when considering surfaces.

We are interested in (complex) projective surfaces $Z$. Deciding the nefness of certain divisors on them would take us to prove important conjectures as the Nagata conjecture \cite{Nag}, see the introduction of \cite{FulMur}. In our context, to define Cox$(Z)$, we assume that $Z$ is a smooth surface having a finite set freely generating the Picard group of $Z$. Despite its importance, the classification of smooth complex projective surfaces both with (finite) polyhedral cones of curves and finitely generated Cox rings remain open problems.

For a rational surface $Z$, its anticanonical class, $-K_Z$, plays an important role in the finite generation of Cox$(Z)$, because it holds whenever $-K_Z$ is big \cite{TesVarVel}. Regarding the  nefness of divisors, the degrees of a minimal homogeneous generating set of Cox$(Z)$ are known whenever Cox$(Z)$  is finitely generated and $-K_Z$ is nef \cite{ArtPer}.

It is well known that a projective rational surface $Z$ can be obtained by blowing-up at a configuration (of infinitely near points) over the (complex) projective plane $\mathbb{P}^2$ or a (complex) Hirzebruch surface $\mathbb{F}_\delta$. Although NE$(Z)$ is not always polyhedral in the rational case \cite{CamGonz} (implying Cox$(Z)$ is not finitely generated), recent results show that, when $Z$ arises from blowing-up at a configuration $\mathcal{C}$ over $\mathbb{F}_\delta$, there exist two positive integers $a$ and $b$, that depend on an arrowed dual graph attached to $\mathcal{C}$, such that if $\delta \geq a$, then the cone NE$(Z)$ is polyhedral with easily determined extremal rays. Furthermore, the classes generating Nef$(Z)$ can be explicitly computed. In addition if  $\delta \geq b$, then Cox$(Z)$ is finitely generated \cite{GalMonMor2025-RAC}. See also \cite{RosaFrisMus2017, RosaFrisMus2023,FriLahy2021}.
Significantly less is known when the configuration $\mathcal{C}$ is over $\mathbb{P}^2$  \cite{HarbLahy, GalMon2005, GalMon2005-JA, Ott, GalMon2016}.

An interesting case involves the configuration $\mathcal{B}$ of (proper and infinitely near) base points of a pencil at infinity associated with a curve having one place at infinity and the rational surface $X$ obtained by blowing-up at $\mathcal{B}$. Campillo et al. proved in \cite{CamPilReg-2002} that the cone NE$(X)$ is polyhedral describing its extremal rays. They also proved that the characteristic and nef cones coincide and are regular.

In this paper, we go further and consider configurations $\mathcal{C} =\mathcal{B} \cup \mathcal{D} $, where the points in $\mathcal{D}$ lie on the strict transforms of some curves of the pencil. We prove that the cone of curves NE$(\widetilde{X})$ of the surfaces $\widetilde{X}$ obtained by blowing-up $\mathbb{P}^2$ at $\mathcal{C}$ is polyhedral. Our first main result, Theorem \ref{thm_conos}, supplies explicit generators as convex cones for both NE$(\widetilde{X})$ and Nef$(\widetilde{X})$. Our second main result, Theorem \ref{thm:Cox_ring}, proves that the Cox ring, Cox$(\widetilde{X})$, of surfaces $\widetilde{X}$ linked to Abhyankar-Moh-Suzuki (AMS)-type curves (see Definition \ref{ams_type}) and suitable sets $\mathcal{D}$ is finitely generated (as $\mathbb{C}$-algebra).

Section \ref{sec_preliminares} introduces the necessary notation, recalls the concept of pencil at infinity and the nice structure of the cone of curves of the surface obtained by blowing-up at the configuration $\mathcal{B}$. Section \ref{sec:nef_cone} contains Theorem \ref{thm_conos} and analyzes NE$(\widetilde{X})$ and Nef$(\widetilde{X})$ for a surface $\widetilde{X}$ given by the configuration $\mathcal{C}$. Finally finite generation of the Cox ring in the AMS case is addressed in Section \ref{sec:Cox_ring}, where we show that we consider nonredundant configurations.

\section{Preliminaries}\label{sec_preliminares}

\subsection{Curves and pencils at infinity}\label{subsec_pen.inf}

Let $L$ be the line at infinity in the compactification of the complex affine plane $\mathbb{A}^2$ to the complex projective plane $\mathbb{P}^2.$ An integral projective curve $C$ on $\mathbb{P}^2$ is said to \emph{have one place at infinity} if the intersection $C\cap L$ is a single point $p\in\mathbb{P}^2$ and $C$ is reduced and unibranch at $p$. The point $p$ is called \emph{the point at infinity} of $C$.

Denote $B_{q}(Z)$ the surface obtained by blowing-up a smooth surface $Z$ at a closed point $q \in Z$ and set $E_q$ the resulting exceptional divisor. A {\it constellation} (of infinitely near points) over $Z$ is a finite set of closed points $\mathcal{C}=\{q_j\}_{j=1}^N$ where $q_1=q \in Z_0:=Z$, $q_{j+1} \in Z_j:=\text{Bl}_{q_j} (Z_{j -1})$, $1 \leq j \leq N$, and its origin $q_1$ is the image of $q_j$ under the composition of blowups giving rise to $q_j$ whenever $2 \leq j \leq N$. A {\it configuration} (of infinitely near points) over $Z$ is a finite disjoint union of constellations with origins in $Z$. For indices $1 \leq i, j \leq N$, a point $q_i$ is {\it infinitely near} to the point $q_j$ (denoted $q_i \geq q_j$) if $q_j=q_i$ or if $q_j$ is the image under the composition of blowups providing $q_i$. Moreover, a point $q_i$ is {\it proximate} to the point $q_j$ (denoted $q_i\to q_j$) when $i>j$ and $q_i$ lies on the strict transform of the exceptional divisor $E_{q_j}$ on $Z_i$. A point of the configuration $\mathcal{C}$ is named {\it satellite} when it is proximate to other two points of $\mathcal{C}$. Otherwise, it is called \emph{free}.

Let $C$ be a curve with one place at infinity. Consider the infinite sequence of blowups
\begin{equation}\label{eq_seq_infinity}
    \cdots \rightarrow X_i\xrightarrow{\pi_i} X_{i-1}\rightarrow \cdots \rightarrow X_1 \xrightarrow{\pi_1} X_0:=\mathbb{P}^2,
\end{equation}
where the center of the first blowup $\pi_1$ is the point at infinity $p=:p_{1}$ and, for $i>1$, the morphism $\pi_{i}$ is the blowup of $X_{i-1}$ at the unique point $p_{i}$ lying on the strict transform of $C$ and the exceptional divisor $E_{p_{i-1}}$ (for simplicity, also denoted  $E_{i-1}$) created by $\pi_{i-1}$. Set $m$ the positive integer such that the composition  $\sigma:\pi_m \circ \cdots \circ \pi_1 : X_m \rightarrow \mathbb{P}^2$ gives rise to the minimal embedded resolution of the germ of $C$ at $p$. This implies that the strict transform of $C$ by $\sigma$ at $p_m$ becomes regular and transversal to the exceptional divisor.

Without loss of generality, fix homogeneous coordinates $(\mathcal{X:Y:Z})$ on $\mathbb{P}^2$ and set $\mathcal{Z}=0$ the equation of $L$ and $p=(1:0:0)$ the point at infinity. In the chart $\mathcal{Z}\neq 0$, let $(x,y)$ be coordinates and $f(x,y)=0$ be an equation of $C$. The {\it pencil at infinity} defined by $C$, $\mathcal{P}_C$, is the pencil of projective curves on $\mathbb{P}^2$ whose equations are $\lambda F(\mathcal{X:Y:Z})+\mu \mathcal{Z}^d=0$, where $F(x,y,1)=f(x,y)$, $d$ is the degree of $C$ and $(\lambda:\mu)$ runs over $\mathbb{P}^1$. It is known that those curves where $\lambda\neq 0$ have one place at infinity and are equisingular at $p$. Moreover, there exists a positive integer $n\geq m$ such that the composition of blowups $\pi:\pi_n \circ \cdots \circ \pi_1 : X_n \rightarrow \mathbb{P}^2$ eliminates the base points of the pencil $\mathcal{P}_C$ \cite{Moh}. We call $\mathcal{B}=\{p_i\}_{i=1}^n$ the {\it configuration of base points of the pencil} $\mathcal{P}_C$. Notice that it is in fact a constellation.


The ``topology'' of $\pi$ can be coded in a labelled tree, called \emph{dual graph} of $\pi$ and denoted $\Gamma_\pi.$ Its vertices represent the (strict transforms $\widetilde{E}_i$ on $X_n$ of the) exceptional divisors $E_i$ and are labelled with the number $i$; moreover two vertices labelled $i$ and $j$ are joined by an edge whenever $E_i\cap E_j\neq \emptyset$. Note that sometimes we also represent $E_i$ a strict transform of $E_i$. Let $g$ be the number of vertices of degree $3.$ When $g >0$, we also label these vertices $st_j$, $0 < j \leq g$. For convenience, we set $st_{0}=1$. Vertices of degree $1$ are also labeled $\rho_j$, $0 \leq j \leq g+1$, where $\rho_0 = 1.$ The graph $\Gamma_\pi$ is divided in subgraphs $\Gamma^j$, $1 \leq j \leq g+1$, where $\Gamma^1$ contains the vertices (and attached edges) with labels $1 \leq i \leq st_1$; $\Gamma^j$, $2 \leq j \leq g$, those with labels $st_{j-1} \leq i \leq st_j$ and $\Gamma^{g+1}$ (usually named the tail of $\Gamma_\pi$) the vertices labelled $i$, $st_g \leq i \leq n$. Notice that when $g=0$, $st_1$ is undefined and $\Gamma_\pi$ coincides with its tail. The shape of the dual graph $\Gamma_\pi$ is shown in Figure \ref{Fig_dualgraphdivisorial}.

\begin{center}
\begin{figure}[h]
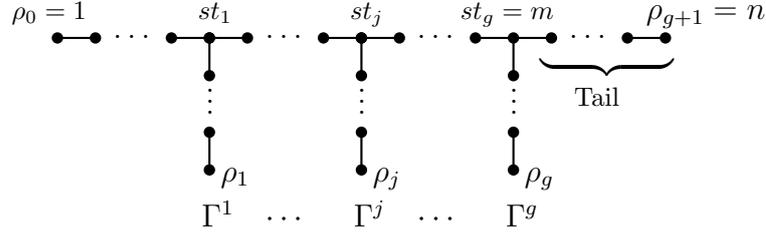


\setlength{\unitlength}{0.5cm}%
\begin{Picture}(0,0)(16,8)
\thicklines


\xLINE(0,6)(1,6)
\Put(0,6){\circle*{0.3}}
\Put(1,6){\circle*{0.3}}
\put(1,6){$\;\;\ldots\;\;$}
\xLINE(3,6)(4,6)
\Put(3,6){\circle*{0.3}}
\Put(4,6){\circle*{0.3}}
\xLINE(4,6)(4,5)
\Put(4,5){\circle*{0.3}}
\Put(3.9,4){$\vdots$}
\xLINE(4,3.5)(4,2.5)
\Put(4,3.5){\circle*{0.3}}
\Put(4,2.5){\circle*{0.3}}
\Put(3.8,1){$\Gamma^1$}

\put(4.3,2.2){$\rho_1$}

\Put(3.7,6.5){\footnotesize $st_1$}
\Put(-1.2,6.5){\footnotesize $\rho_0=1$}



\xLINE(4,6)(5,6)
\Put(4,6){\circle*{0.3}}
\Put(5,6){\circle*{0.3}}
\Put(5,6){$\;\;\ldots\;\;$}
\xLINE(7,6)(8,6)
\Put(7,6){\circle*{0.3}}
\Put(8,6){\circle*{0.3}}
\xLINE(8,6)(8,5)
\Put(8,5){\circle*{0.3}}
\Put(7.9,4){$\vdots$}
\xLINE(8,3.5)(8,2.5)
\Put(8,3.5){\circle*{0.3}}
\Put(8,2.5){\circle*{0.3}}
\Put(7.8,1){$\Gamma^j$}

\put(8.3,2.2){$\rho_j$}

\Put(7.7,6.5){\footnotesize $st_j$}

\Put(5,1){$\;\;\cdots\;\;$}


\xLINE(8,6)(9,6)
\Put(8,6){\circle*{0.3}}
\Put(9,6){\circle*{0.3}}
\put(9,6){$\;\;\ldots\;\;$}
\xLINE(11,6)(12,6)
\Put(11,6){\circle*{0.3}}
\Put(12,6){\circle*{0.3}}
\xLINE(12,6)(12,5)
\Put(12,5){\circle*{0.3}}
\Put(11.9,4){$\vdots$}
\xLINE(12,3.5)(12,2.5)
\Put(12,3.5){\circle*{0.3}}
\Put(12,2.5){\circle*{0.3}}
\Put(11.8,1){$\Gamma^g$}

\put(12.3,2.2){$\rho_g$}

\Put(10.6,6.5){\footnotesize $st_g=m$}

\Put(9,1){$\;\;\cdots\;\;$}


\xLINE(12,6)(13,6)
\Put(13,6){\circle*{0.3}}
\Put(13,6){$\;\;\ldots\;\;$}
\Put(15,6){\circle*{0.3}}
\xLINE(15,6)(16,6)
\Put(16,6){\circle*{0.3}}


\Put(12.7,5.5){$\underbrace{\;\;\;\;\;\;\;\;\;\;\;\;\;\;\;}$}
\Put(13.6,4.2){{\footnotesize Tail}}

\Put(15.5,6.5){$\rho_{g+1}=n$}
%
\end{Picture}
  \caption{Dual graph $\Gamma_\pi$.}
  \label{Fig_dualgraphdivisorial}
\end{figure}
\end{center}

Let $G$ be a curve on $\mathbb{P}^2$. Consider a sequence as in \eqref{eq_seq_infinity} and two positive integers $i$ and $j.$  We use the notation $\varphi_G$ for the corresponding germ of $G$ at the closed point $p$. Additionally, $\varphi_i$ stands for an analytically irreducible  germ at $p$ whose strict transform on $X_i$ is transversal to the exceptional divisor $E_i$ at a general point. We write $\operatorname{mult}_{p_j}(\varphi_i)$ (respectively, $\operatorname{mult}_{p_j}(\varphi_G)$) for the multiplicity of the strict transform of $\varphi_i$ (respectively, $\varphi_G$) at $p_j$. By \cite[Theorem 3.5.3]{Cas}, the following equalities, known as \emph{proximity equalities}, hold:
\begin{equation}\label{eq_proximities_eq}
\operatorname{mult}_{p_i}(\varphi_G)=\sum_{p_j\to p_i}\operatorname{mult}_{p_j}(\varphi_G).
\end{equation}
In addition, $(\varphi,\phi)_p$ denotes the intersection multiplicity at $p$ of two curve germs $\varphi$ and $\phi$. We will frequently apply the Max-Noether formula \cite{Ful2004}, which we recall for the convenience of the reader.

\begin{lemma}\label{lem_for_Noether}
Let $\varphi,\phi$ be two germs of curve on $\mathbb{P}^2$ at $p$ with no irreducible components in common. Then, it holds that
    \begin{equation}\label{eq_formula_Noether}
    (\varphi,\phi)_p=\sum_{q}\operatorname{mult}_q(\widetilde{\varphi})\operatorname{mult}_q(\widetilde{\phi}),
    \end{equation}
    where $q$ runs over all infinitely near to $p$ points that lie on the strict transforms $\widetilde{\varphi}$ and $\widetilde{\phi}$ of both $\varphi$ and $\phi$.
\end{lemma}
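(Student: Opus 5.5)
We prove the Max-Noether formula (Lemma \ref{lem_for_Noether}) by induction on the intersection multiplicity $(\varphi,\phi)_p$. The single tool is the behaviour of intersection multiplicities under one blowup. Let $\pi_p\colon B_p(\mathbb{P}^2)\to\mathbb{P}^2$ be the blowup at $p$, with exceptional divisor $E_p$, and let $\widetilde\varphi,\widetilde\phi$ be the strict transforms. The key identity to establish is
\begin{equation*}
(\varphi,\phi)_p=\operatorname{mult}_p(\varphi)\operatorname{mult}_p(\phi)+\sum_{q\in E_p}(\widetilde\varphi,\widetilde\phi)_q .
\end{equation*}
Once this holds, we iterate it. Each term $(\widetilde\varphi,\widetilde\phi)_q$ with $q\in E_p\cap\widetilde\varphi\cap\widetilde\phi$ is strictly smaller than $(\varphi,\phi)_p$, since $\operatorname{mult}_p(\varphi)\operatorname{mult}_p(\phi)\ge 1$ whenever $p$ lies on both germs. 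Induction then expresses $(\varphi,\phi)_p$ as the sum of $\operatorname{mult}_q(\widetilde\varphi)\operatorname{mult}_q(\widetilde\phi)$ over all infinitely near points $q$ common to the successive strict transforms. Only finitely many such points occur, precisely because the multiplicity is finite, which in turn follows from the absence of common components. The base case is $(\varphi,\phi)_p=0$: then $p$ does not lie on both germs, and both sides vanish.

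To prove the one-blowup identity, we work in local coordinates $(x,y)$ at $p$. Let $\varphi=\{f=0\}$ and $\phi=\{h=0\}$, with $r=\operatorname{mult}_p(f)$ and $s=\operatorname{mult}_p(h)$. After a linear change of coordinates we may assume the direction $x=0$ is not tangent to either germ, so that all points of $E_p\cap\widetilde\varphi\cap\widetilde\phi$ lie in the chart $x=x_1$, $y=x_1y_1$. In that chart $f=x_1^r f_1$ and $h=x_1^s h_1$, where $f_1$ and $h_1$ define the strict transforms. The total transform of $\varphi$ is $rE_p+\widetilde\varphi$, and $\widetilde\phi\cdot E_p=s$. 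We then use the projection formula, or equivalently compute with local intersection numbers:
\begin{equation*}
(\varphi,\phi)_p=\sum_{q\in E_p}(\pi_p^*\varphi,\widetilde\phi)_q=r\,(E_p\cdot\widetilde\phi)+\sum_{q}(\widetilde\varphi,\widetilde\phi)_q=rs+\sum_q(\widetilde\varphi,\widetilde\phi)_q .
\end{equation*}
Alternatively, one can argue purely algebraically. We have $(\varphi,\phi)_p=\dim\mathcal{O}_p/(f,h)$, and the identity is then checked through the isomorphism $\mathcal{O}_p/(f,h)$ versus $\mathbb{C}[x_1,y_1]_{\text{loc}}/(f_1,h_1)$ together with the filtration by powers of $x_1$, as in Fulton's \emph{Algebraic Curves}.

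The main obstacle is making the pullback step rigorous at the level of germs rather than global curves. First, $\pi_p$ is an isomorphism outside $E_p$, so the local intersection of the pullbacks concentrates at the finitely many points of $E_p\cap\widetilde\phi$. Second, $(\pi_p^*\varphi\cdot\widetilde\phi)$ summed over these points equals $(\varphi,\phi)_p$. We intend to handle this by citing the projection formula for proper birational morphisms of smooth surfaces (e.g.\ \cite{Cas}), after replacing the germs by global representatives that meet only at $p$ near the point. We also have to check that the equality $E_p\cdot\widetilde\phi=\operatorname{mult}_p(\phi)$ is computed locally by the lowest-degree form of $h$. Finally, we verify that the points $q$ appearing in the iteration are exactly those lying on both strict transforms, since points off either transform contribute zero. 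This completes the proof.
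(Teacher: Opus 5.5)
The paper gives no proof of this lemma: it recalls Max Noether's formula as a classical fact, with a reference to \cite{Ful2004}. Your argument is the standard self-contained proof, and it is correct. It consists of the one-blowup identity $(\varphi,\phi)_p=\operatorname{mult}_p(\varphi)\operatorname{mult}_p(\phi)+\sum_{q\in E_p}(\widetilde\varphi,\widetilde\phi)_q$, followed by strong induction on $(\varphi,\phi)_p$. The bound $\operatorname{mult}_p(\varphi)\operatorname{mult}_p(\phi)\geq 1$ makes the induction well founded and also bounds the number of common infinitely near points. Compared with the paper's citation, your route buys self-containedness at the cost of a local computation the authors chose not to reproduce.

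A few points would tighten it.

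First, after one blowup the germs live on $B_p(\PP^2)$, not on $\PP^2$. So the inductive statement should be phrased for germs at a point of an arbitrary smooth surface, equivalently for $f,h\in\mathbb{C}\{x,y\}$ without common factor. You should also note that the strict transforms have no common component at $q$, since $\pi_p$ is an isomorphism off $E_p$ and neither transform contains $E_p$.

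Second, the pullback step you single out as the main obstacle needs no global representatives. Let $\gamma(t)=(x(t),y(t))$ be a primitive parametrization of a branch of $\phi$, and $\widetilde\gamma(t)=(x(t),y(t)/x(t))$ its lift. Then $f(\gamma(t))=x(t)^r f_1(\widetilde\gamma(t))$, so $\operatorname{ord}_t f(\gamma(t))=r\operatorname{ord}_t x(t)+\operatorname{ord}_t f_1(\widetilde\gamma(t))$. Summing over the branches of $\phi$, with $\sum\operatorname{ord}_t x(t)=s$ because $x=0$ is not tangent to $\phi$, yields the identity directly.

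Third, in your algebraic alternative $\mathcal{O}_p/(f,h)$ is not isomorphic to the quotient by $(f_1,h_1)$: their dimensions differ by $rs$. That sentence should be recast as a length computation or dropped.
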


We use the value $i_L$ to indicate the last point of the configuration $\mathcal{B}$ through which the strict transform of the line at infinity passes. We often omit the symbol $\tilde{\cdot}$ when considering strict transforms. Observe that $$d:=\deg(C)=(\varphi_L,\varphi_i)_p =\sum_{j=1}^{i_L}\operatorname{mult}_{p_j}(\varphi_L)\operatorname{mult}_{p_j}(\varphi_i) , \, \text{ for } \, m\leq i\leq n.$$

The value semigroup of the curve $C$ at the point $p$ is defined as follows.
$$
S_{C,p}:=\{j\in\mathbb{N} \ | \ j=(\varphi_C,\phi)_p \text{ for some curve germ $\phi$  at $p$}\}.
$$
 The maximal contact values of $C$ at $p,$ $\{\overline{\beta}_i\}_{i=1}^g$, are   $\overline{\beta}_i:=(\varphi_C,\varphi_{\rho_i})_p$ for $0\leq i\leq g$ and they minimally generate $S_{C,p}$. Letting $e_i=\gcd\left(\overline{\beta}_0,\ldots,\overline{\beta}_{i}\right)$ for $0\leq i\leq g$, and $e_{i-1}=e_i\,n_i$ for $1\leq i\leq g$, it holds that $\overline{\beta}_0<\overline{\beta}_1$,  $n_i\overline{\beta}_i<\overline{\beta}_{i+1},$ for $0\leq i\leq g-1,$ and $n_i\,\overline{\beta}_i\in\langle \overline{\beta}_0,\ldots,\overline{\beta}_{i-1} \rangle$ for $1\leq i\leq g$. Moreover, by \cite[Lemma 4]{Bre}, one has that $(\varphi_n,\varphi_{st_j})_p=n_j\overline{\beta}_j$. Further details can be found in \cite{Cas,Zar06}.

The following result will be useful. Item (i) follows from \cite[Proposition 4]{CamPilReg-2002}. Although Item (ii) was previously proven in \cite[Corollary 1]{CamPilReg-2002}, we provide an alternative proof here as our terminology differs significantly from theirs.

\begin{lemma}\label{LemmaCamp}
     Keep the notation introduced before. Then,
     \begin{itemize}
         \item[(i)] $ (\varphi_L,\varphi_n)_p\,(\varphi_L,\varphi_{\rho_{j}})_p>\overline{\beta}_{j},\text{ for } 0\leq j\leq g.$
         \item[(ii)] $ (\varphi_L,\varphi_n)_p\,(\varphi_L,\varphi_r)_p>(\varphi_r,\varphi_n)_p, \, \text{ for }1\leq r<n.$
     \end{itemize}
\end{lemma}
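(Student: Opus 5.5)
We take (i) as given from \cite[Proposition 4]{CamPilReg-2002} and plan to derive (ii) from it, using the Noether formula of Lemma \ref{lem_for_Noether} and the proximity equalities \eqref{eq_proximities_eq}. Write $d=(\varphi_L,\varphi_n)_p$. The basic observation is that, for a fixed $n$, the functions $r\mapsto(\varphi_L,\varphi_r)_p$ and $r\mapsto(\varphi_r,\varphi_n)_p$ are both computed from multiplicity sequences along the chain of points $p_1,\dots,p_r$. We will therefore compare their growth as $r$ moves along the dual graph $\Gamma_\pi$, subgraph by subgraph.

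\textbf{Reduction to distinguished vertices.} By Noether's formula,
\[
(\varphi_r,\varphi_n)_p=\sum_{j\le r}\operatorname{mult}_{p_j}(\varphi_r)\operatorname{mult}_{p_j}(\varphi_n),
\qquad
(\varphi_L,\varphi_r)_p=\sum_{j\le \min(r,i_L)}\operatorname{mult}_{p_j}(\varphi_L)\operatorname{mult}_{p_j}(\varphi_r).
\]
The plan is to show that on each subgraph $\Gamma^j$, and on the tail, the ratio
\[
\frac{(\varphi_r,\varphi_n)_p}{(\varphi_L,\varphi_r)_p}
\]
is maximized at an endpoint. We expect the relevant endpoints to be the dead end $\rho_j$, or $n$ for the tail. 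Within a maximal chain of free points, or a chain of satellite points, the multiplicity vectors $(\operatorname{mult}_{p_k}(\varphi_r))_k$ vary in a controlled way by the proximity equalities. Consequently both intersection numbers can be expressed in terms of the values at neighbouring distinguished vertices: along a Euclidean-algorithm chain, $\varphi_r$ behaves like a positive combination of the germs of the two adjacent rupture/dead-end vertices. Numerator and denominator are both additive in this sense, so the ratio is a mediant of the ratios at those vertices. It is then bounded by their maximum.

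\textbf{Checking the endpoints.} At $r=\rho_j$ we have $(\varphi_{\rho_j},\varphi_n)_p=\overline\beta_j$, since $\varphi_n$ and $C$ are equisingular with the same resolution, so (i) gives exactly $d\,(\varphi_L,\varphi_{\rho_j})_p>\overline\beta_j$. At $r=st_j$ we use \cite[Lemma 4]{Bre}, which gives $(\varphi_n,\varphi_{st_j})_p=n_j\overline\beta_j$. We also have $(\varphi_L,\varphi_{st_j})_p=n_j(\varphi_L,\varphi_{\rho_j})_p$, because $\varphi_{st_j}$ is equivalent to $n_j$ copies of $\varphi_{\rho_j}$ up to the points after $\rho_j$, and $L$ passes through only earlier points (here we need $i_L\le \rho_1$ or an analogous check). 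Hence the $st_j$ case follows by multiplying (i) by $n_j$. For the tail, with $m\le r<n$, we have $(\varphi_L,\varphi_r)_p=d$, since $L$ separates before $m$. Meanwhile $(\varphi_r,\varphi_n)_p<(\varphi_n,\varphi_n)$-type values, of the form $n_g\overline\beta_g+(r-m)$. The inequality then reduces to $d^2>n_g\overline\beta_g+(n-1-m)$. This should follow from $d^2=(\varphi_n,\varphi_n)$ in the pencil count, i.e.\ $d^2=\sum_j \operatorname{mult}_{p_j}(\varphi_n)^2$ over all $n$ base points, together with the strictly positive contributions of the points $p_{r+1},\dots,p_n$.

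\textbf{Main obstacle.} The delicate step is the mediant/convexity argument on the intermediate vertices of each $\Gamma^j$, in particular the free points just after $st_{j-1}$ that lie in the region where $L$ still passes. There, $(\varphi_L,\varphi_r)_p$ may stop growing while $(\varphi_r,\varphi_n)_p$ keeps growing. To handle this, we would first try to show directly that on each free-point chain both quantities are affine in $r$, with slopes $\operatorname{mult}_{p_r}(\varphi_L)\operatorname{mult}(\cdot)$ and $\operatorname{mult}_{p_r}(\varphi_n)$ respectively. An affine ratio is monotone, so checking the right endpoint $\rho_j$ or $st_j$ via (i) suffices.
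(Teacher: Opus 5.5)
\textbf{The reduction step is a genuine gap.} The step that carries all the difficulty is the claim that on each $\Gamma^j$ the ratio $(\varphi_r,\varphi_n)_p/(\varphi_L,\varphi_r)_p$ is maximized at a distinguished vertex. You only assert it, and the mechanism you indicate does not give it. Let $p_r$ be a satellite point proximate to $p_{r-1}$ and $p_a$. The proximity equalities say that the multiplicity vector of $\varphi_r$ is that of $\varphi_{r-1}$, plus that of $\varphi_a$, plus the unit vector at $p_r$. Since $L$ goes through no satellite point, this gives $(\varphi_L,\varphi_r)_p=(\varphi_L,\varphi_{r-1})_p+(\varphi_L,\varphi_a)_p$, but
\[
(\varphi_r,\varphi_n)_p=(\varphi_{r-1},\varphi_n)_p+(\varphi_a,\varphi_n)_p+\operatorname{mult}_{p_r}(\varphi_n).
\]
So the numerator is not additive, and the ratio at $r$ lies strictly \emph{above} the mediant. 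That is the wrong direction for an upper bound: in Example \ref{ex_cusp_not}, at $r=3$ one gets $6/3=2>(3+2)/(2+1)$.

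Nor are the multiplicity vectors positive combinations of those at the neighbouring distinguished vertices. For $(\overline\beta_0,\overline\beta_1)=(5,7)$, with $\rho_1=2$ and $st_1=5$, the vector $(2,1,1,0,0)$ of $\varphi_3$ is not in the cone spanned by $(1,1,0,0,0)$ and $(5,2,2,1,1)$.

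The interpolation you want is in fact true on the satellite part of $\Gamma^j$, with endpoints $\rho_j$ (or $\rho_j-1$) and $st_j$. At $p_{\rho_j+1}=E_{\rho_j}\cap E_{\rho_j-1}$ the valuations of $E_r$, $\rho_j<r\le st_j$, are monomial in local equations of these two divisors. $\widetilde L$ does not pass through that point. The strict transform of $C$ there has a one-edge Newton polygon whose break is exactly the direction of $E_{st_j}$, so both pairings are linear on each side of $st_j$. But this is a real argument that your proposal lacks, and your fallback in the last paragraph covers only free-point chains. Hence every satellite vertex $\rho_j<r<st_j$ is unproved; these are the content of the paper's subcases (a) and (c). The paper does not interpolate. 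In subcase (c) it uses the maximal contact values of the truncated sequence $\pi_r\circ\cdots\circ\pi_1$ and \cite[Lemma 4]{Bre} to compare $(\varphi_r,\varphi_n)_p$ and $(\varphi_L,\varphi_r)_p$ directly with $n_j^r$ times the data at $\rho_j$, then applies (i). Subcase (a), where $i_L=\rho_1$, is treated separately.

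\textbf{The endpoint check at $st_1$ is also wrong when $i_L=\rho_1$.} The identity $(\varphi_L,\varphi_{st_1})_p=n_1(\varphi_L,\varphi_{\rho_1})_p$ needs $i_L<\rho_1$; your proviso $i_L\le\rho_1$ is vacuous, since $p_{\rho_1+1}$ is satellite. It fails when $i_L=\rho_1$, which is exactly Example \ref{ex_cusp_not}: there $(\varphi_L,\varphi_3)_p=3\neq 4=n_1(\varphi_L,\varphi_2)_p$. In that case $d=\overline\beta_1$ and $(\varphi_L,\varphi_{st_1})_p=\overline\beta_1/e_1$, so the inequality reduces to $\overline\beta_1>\overline\beta_0$. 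Also, the maximum of your ratio over the free chain of $\Gamma^1$ is then $\overline\beta_0$, attained at $r=1$ rather than at $\rho_1$, so (i) with $j=0$ is needed there.

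\textbf{A way to avoid the case analysis.} Write the fibre of $X_n\to\mathbb{P}^1$ (given by $\mathcal{P}_C$) containing $\widetilde L$ as $d\widetilde L+\sum_{i<n}c_i\widetilde E_i$. Pairing this fibre with $D_{0\,r}$, which satisfies $D_{0\,r}\cdot\widetilde L=0$ and $D_{0\,r}\cdot\widetilde E_i=\delta_{ir}$, shows that $d\,(\varphi_L,\varphi_r)_p-(\varphi_r,\varphi_n)_p=D_{0\,r}\cdot D_{0\,n}=c_r$. For $r<n$ the curve $\widetilde E_r$ is vertical and is joined to $\widetilde L$ through vertical curves, because $n$ is a leaf of $\Gamma_\pi$. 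Hence $\widetilde E_r$ lies in that fibre and $c_r>0$.
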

\begin{proof}
    As said, we provide a proof for Item (ii). We proceed in steps based on the value $r$, $1\leq r<n.$ To start, assume that $r\leq i_L$ and consider two subcases, $r<\rho_1$ and $r=\rho_1$. In the first case ($r\leq i_L$ and $r<\rho_1$), the Noether formula implies that  $(\varphi_L,\varphi_r)_p=r$ and $(\varphi_L,\varphi_n)_p>\overline{\beta}_0$. Then, one obtains that
    $$
    (\varphi_L,\varphi_n)_p\,(\varphi_L,\varphi_r)_p-(\varphi_r,\varphi_n)_p=r\,(\varphi_L,\varphi_n)_p -r\overline{\beta}_0>0.
    $$
    Now suppose that $r\leq i_L$ and $r=\rho_1$. This implies that $i_L=r=\rho_1$ and then  $(\varphi_L,\varphi_n)_p=\overline{\beta}_1$. Consequently,
    $$
    (\varphi_L,\varphi_n)_p\,(\varphi_L,\varphi_r)_p-(\varphi_r,\varphi_n)_p=i_L\overline{\beta}_1-\overline{\beta}_1>0,
    $$
    since $i_L\geq 2$.

    The remaining case is $r>i_L$, which we divide into three subcases:

    (a) $i_L=\rho_1$ and $\rho_1<r\leq st_1$;

    (b) either $1\leq j\leq g-1$ and $st_j\leq r \leq \rho_{j+1}$ or $j=1$ and $i_L\leq r<\rho_1$;

    (c) either $1< j\leq g$ and $\rho_j< r \leq st_{j},$ or $\rho_1<r\leq st_1$ and $i_L<\rho_1.$\\

    {\it Subcase (a):} By the Noether formula, $(\varphi_L,\varphi_n)_p=\overline{\beta}_1$ and $(\varphi_L,\varphi_r)_p>(i_L-1)n_1\geq n_1$. In addition, by \cite[Lemma 4]{Bre}, $(\varphi_n,\varphi_{st_1})_p=n_1\overline{\beta}_1$. As a consequence, it follows that
    $$
    (\varphi_L,\varphi_n)_p\,(\varphi_L,\varphi_r)_p-(\varphi_r,\varphi_n)_p>n_1\overline{ \beta}_1-n_1\overline{\beta}_1=0.
    $$

    {\it Subcase (b):} If $r=\rho_{j+1}$, the inequality follows by Item (i). When $r\neq \rho_{j+1},$ again by Item (i) it holds
    $$
    (\varphi_L,\varphi_n)_p\,(\varphi_L,\varphi_r)_p-(\varphi_r,\varphi_n)_p\geq (\varphi_L,\varphi_n)_p\,(\varphi_L,\varphi_{\rho_{j+1}})_p-\overline{\beta}_{j+1}>0.
    $$

    {\it Subcase (c):} Consider the sequence of blowups $\pi_r \circ \cdots \circ \pi_1 : X_r \rightarrow \mathbb{P}^2$ and the corresponding maximal contact values for this sequence. That is, the values $\overline{\beta}_k^r:=(\varphi_r,\varphi_{\rho_k})_p$ for $1\leq k\leq j$. Denote $e_k^r:=\gcd(\overline{\beta}_0^r,\ldots,\overline{\beta}_{k}^r)$ for $0\leq k\leq j$ and $e_{k-1}^r:=n_k^r\,e_{k}^r$ for $1\leq k\leq j$. Then, $(\varphi_r,\varphi_r)_p=n_j^r\overline{\beta}_j^r$ by \cite[Lemma 4]{Bre} and then, by the Noether formula, one has that
    $$
    (\varphi_n,\varphi_r)_p=e_j\,(\varphi_r,\varphi_r)_p= e_j\,n_j^r\overline{\beta_j}^r\leq n_j^r\overline{\beta}_j \text{ and } (\varphi_L,\varphi_r)_p= n_j^r(\varphi_L,\varphi_{\rho_j})_p.
    $$
    Therefore, we deduce:
    \begin{equation*}
    \begin{array}{rl}
        (\varphi_L,\varphi_n)_p\,(\varphi_L,\varphi_r)_p-(\varphi_n,\varphi_r)_p\geq n_j^r\,(\varphi_L,\varphi_n)_p\, (\varphi_L,\varphi_{\rho_j})_p-n_j^r\overline{\beta}_j  > 0,
       \end{array}
    \end{equation*}
    where the second inequality follows from Item (i).
\end{proof}

\subsection{Surfaces obtained by eliminating base points of pencils at infinity.}

Maintaining the notation from Subsection \ref{subsec_pen.inf}, let $Z$ be the rational projective surface obtained by a finite sequence of blowups $\pi: Z \rightarrow \mathbb{P}^2$. Set $\operatorname{NS}(Z)$ the Nerón-Severi group of $Z$ and  $\operatorname{NS}_\mathbb{R}(Z):=\operatorname{NS}(Z)\otimes\mathbb{R}$. The symbol $\cdot$ denotes the intersection product within these spaces. As mentioned in the introduction, let $\operatorname{NE}(Z)$ (respectively, $\overline{\operatorname{NE}}(Z),\,\operatorname{Nef}(Z)$ and $\operatorname{Cox}(Z)$) be the cone of curves (respectively, the pseudoeffective cone or Mori cone, the nef cone and the Cox ring) of $Z$. We usually denote by $\widetilde{E}$ (respectively, $E^*$) the strict (respectively, total) transform on $Z$ of a divisor $E$ involved in the sequence $\pi$.

The following result due to Campillo et al. \cite{CamPilReg-2002} studies the case $s=n$ (minimal sequence of blowups eliminating the base points of $\mathcal{P}_C$, $C$ being  a curve having one place at infinity).

\begin{theorem}\label{thm_Campillo}
    Let $X$ be the rational projective surface obtained by the composition of blowups $\pi:\pi_n \circ \cdots \circ \pi_1 : X:=X_n \rightarrow \mathbb{P}^2$ that eliminates the base points of the pencil $\mathcal{P}_C$. Then, the cone of curves $\operatorname{NE}(X)$ is generated by the classes in $\mathrm{NS}_{\mathbb{R}}(X)$ of the divisors in the set $\{\widetilde{L},\widetilde{E}_1,\ldots,\widetilde{E}_n\}$.
\end{theorem}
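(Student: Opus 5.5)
The strategy is to exhibit a nef class $D$ that is positive on every irreducible curve outside the finite set $\{\widetilde{L},\widetilde{E}_1,\ldots,\widetilde{E}_n\}$, and then use a standard cone argument. The natural candidate is the class of the strict transform $\widetilde{C}$ of a general member of the pencil $\mathcal{P}_C$. Since $\pi$ eliminates the base points of $\mathcal{P}_C$, the pencil becomes base point free on $X$. Hence $\widetilde{C}$ moves in a base point free pencil, so $\widetilde{C}^2=0$ and $\widetilde{C}$ is nef. More importantly, the pencil induces a morphism $h\colon X\to\PP^1$ whose fibres are the strict transforms of the members of $\mathcal{P}_C$.

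\textbf{Step 1: horizontal curves.} Let $\Gamma$ be an irreducible curve on $X$ that is not contained in a fibre of $h$. Then $\Gamma\cdot\widetilde{C}>0$. The real content is to write the class of $\Gamma$ as a nonnegative combination of the listed generators.

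\textbf{Step 2: vertical curves.} If $\Gamma$ is contained in a fibre, there are two cases. The special member $\lambda=0$ is $d L$, whose total transform on $X$ is
$$
d\,L^*=d\widetilde{L}+\sum_i a_i\widetilde{E}_i,
$$
so its components are $\widetilde{L}$ and the divisors $\widetilde{E}_i$, all of which are in the list. (The divisor $\widetilde{E}_n$ is horizontal, being a section.) Every other member of the pencil has one place at infinity and is irreducible, except possibly reducible members of the pencil in the affine part. I would handle the latter by a different argument, so that fibres need not be irreducible.

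\textbf{Step 3: writing classes in the cone.} Write the class of an arbitrary irreducible curve $\Gamma\ne\widetilde{L},\widetilde{E}_i$ as $\Gamma=a\,\widetilde{L}+\sum_i b_i\widetilde{E}_i$ with real coefficients, and show all coefficients are nonnegative. Using $L^*=\widetilde{L}+\sum_j \operatorname{mult}_{p_j}(\varphi_L)E_j^*$ and $E_j^*=\widetilde{E}_j+\sum_{k\to j}E_k^*$, we have $\Gamma=\deg(\Gamma)L^*-\sum_j m_jE_j^*$. The coefficient on $\widetilde{E}_i$ is then $\deg(\Gamma)\,e_i-\sum_{j}m_j\,c_{ji}$, where $e_i$ and $c_{ji}$ come from expanding the total transforms in terms of strict transforms. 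Here $c_{ji}$ is essentially $(\varphi_j,\varphi_i)_p$ suitably normalised, by the Noether formula, Lemma~\ref{lem_for_Noether}. Nonnegativity should reduce to an inequality of the form
$$
\deg(\Gamma)\,(\varphi_L,\varphi_i)_p\ \ge\ (\varphi_\Gamma,\varphi_i)_p,
$$
compared against $\Gamma\cdot \widetilde{E}_i\ge0$ and $\Gamma\cdot \widetilde{L}\ge 0$. To obtain it I would bound $(\varphi_\Gamma,\varphi_i)_p$ by intersecting the projection of $\Gamma$ with a curve whose germ at $p$ is $\varphi_i$; namely, I would take a member of the pencil, using $d=(\varphi_L,\varphi_n)_p$. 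The inequality $(\varphi_L,\varphi_n)_p(\varphi_L,\varphi_r)_p>(\varphi_r,\varphi_n)_p$ of Lemma~\ref{LemmaCamp}(ii) is exactly the ingredient that makes the dual basis to $\{\widetilde{L},\widetilde{E}_i\}$ consist of classes with the right sign. I would therefore compute the dual basis explicitly, as $L^*$ and certain combinations of the $E_j^*$, and verify via Lemma~\ref{LemmaCamp} that each dual element is nef. Each dual element meets the listed curves nonnegatively, and any other irreducible curve nonnegatively, by Bézout-type estimates.

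The main obstacle is this dual-basis nefness check. It requires handling the different ranges of $r$ relative to $st_j$ and $\rho_j$, which is precisely where Lemma~\ref{LemmaCamp} was prepared. Once it is done, every irreducible curve pairs nonnegatively with the dual basis, so its coefficients are nonnegative and $\operatorname{NE}(X)$ is generated by $\widetilde{L},\widetilde{E}_1,\ldots,\widetilde{E}_n$.
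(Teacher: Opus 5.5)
\paragraph{Verdict: genuine gap.} Your Step 3 reduction is correct, and it is the setup the paper uses. The statement is quoted from Campillo--Piltant--Reguera and recovered as the case $k=0$ of Theorem~\ref{thm_conos}. The set $\{\widetilde L,\widetilde E_1,\ldots,\widetilde E_n\}$ is a basis of $\mathrm{NS}_{\mathbb R}(X)$. Its dual basis is $D_{0\,0}=E_{0\,0}^*$ together with $D_{0\,r}=(\varphi_L,\varphi_r)_pE_{0\,0}^*-\sum_{\ell\le r}\operatorname{mult}_{p_\ell}(\varphi_r)E_{0\,\ell}^*$, and the theorem is equivalent to all these classes being nef.

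\paragraph{The nefness of the dual basis is never proved.} That nefness is the entire content of the theorem. The condition $D_{0\,r}\cdot\widetilde\Gamma\ge 0$ means $\deg(\Gamma)\,(\varphi_L,\varphi_r)_p\ge(\varphi_\Gamma,\varphi_r)_p$ for every plane curve $\Gamma\ne L$.
\begin{itemize}
\item B\'ezout against a member of the pencil only gives $\deg(\Gamma)\,d\ge(\varphi_\Gamma,\varphi_n)_p$. That is nefness of $D_{0\,n}$, which you already had because the pencil is base point free.
\item For indices $r$ with $(\varphi_L,\varphi_r)_p<d$ you would need auxiliary plane curves of degree comparable to $(\varphi_L,\varphi_r)_p$ with suitable contact with $\varphi_r$ at $p$. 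The proposal produces none.
\item Lemma~\ref{LemmaCamp}(ii) does not help in the way you suggest. It is exactly the statement $a_r=D_{0\,r}\cdot D_{0\,n}>0$, a single intersection number of $D_{0\,r}$ with the fibre class. It says nothing about $D_{0\,r}\cdot\widetilde\Gamma$ for a horizontal curve $\Gamma$.
\end{itemize}
Two smaller points. Your opening strategy (one nef class positive off a finite set) would not give generation either. Steps~1--2 only show that vertical curves are $\widetilde L$, the $\widetilde E_i$ with $i<n$, or whole fibres. Members with $\lambda\ne 0$ are integral by definition, so there are no reducible fibres to treat separately.

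\paragraph{The missing idea: positivity and the Hodge index theorem.} This is the argument of Proposition~\ref{prop_autointer} and Theorem~\ref{thm_conos}.
\begin{enumerate}
\item Show $D_{0\,r}^2\ge 0$ for every $r$; the paper gets this from $D_{0\,n}^2=0$ via \cite[Lemma 2]{GalMon2016}.
\item Then all dual classes lie in the positive cone $Q(X)=\{x : x^2\ge 0,\ H\cdot x\ge 0\}$, for $H$ ample.
\item By the Hodge index theorem, any two of them satisfy $D_{0\,r}\cdot D_{0\,r'}\ge 0$.
\item Since the basis is exactly dual, $D_{0\,r}\cdot D_{0\,r'}$ is the coefficient of the $r'$-th generator in $D_{0\,r}$. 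So $P^\vee\subseteq P$: each $D_{0\,r}$ is a nonnegative combination of $\widetilde L,\widetilde E_1,\ldots,\widetilde E_n$.
\item Each $D_{0\,r}$ also meets each of these curves nonnegatively, so it is nef. Every irreducible curve outside the list lies in $P^\vee$, hence in $P$.
\end{enumerate}
Equivalently, what must be proved is the finite family of inequalities $(\varphi_L,\varphi_r)_p(\varphi_L,\varphi_{r'})_p\ge(\varphi_r,\varphi_{r'})_p$ for all $1\le r,r'\le n$. When $r=r'$, read the right side as the intersection of two general curvettes of $E_r$. Lemma~\ref{LemmaCamp}(ii) is only the case $r'=n$.
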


Theorem \ref{thm_Campillo} and \cite{GalMon2005-JA} prove that, for $X$ as before,
the nef cone $\operatorname{Nef}(X)$ and the Cox ring $\operatorname{Cox}(X)$ are finitely generated.

\section{The nef cone of a surface given by configurations extending the pencils at infinity}\label{sec:nef_cone}

Let $C$ be a curve with one place at infinity and $\mathcal{P}_C$ the pencil of plane curves on $\mathbb{P}^2$ associated to $C$. Denote by $\mathcal{B}=\{p_i\}_{i=1}^n$ the configuration of base points of $\mathcal{P}_C$, and by $X_n$ the surface obtained by blowing-up at the points in $\mathcal{B}$. For simplicity, set $p:=p_{1}$. Note that the strict transform on $X=X_n$ of any curve in $\mathcal{P}_C$ with support different from $L$ passes through a unique free point of the exceptional divisor $E_n$ \cite[Theorem 1]{CamPilReg-2002} and its self-intersection vanishes. As a consequence, if we consider the surface $X_{n+1}$ obtained by blowing-up $X_n$ at a free point $q\in E_n$, then, the class of the strict transform on $X_{n+1}$ of any curve in $\mathcal{P}_C$ as before spans an extremal ray of the cone of curves $\operatorname{NE}(X_{n+1})$, see \cite[Lemma 1.22]{KolMor}.

Our aim in this section is to study the behavior of the nef cone of rational surfaces obtained by blowing-up $\PP^2$ at configurations $\conf$ as follows: $\conf=\mathcal{B}\,\cup\, \mathcal{D},$ where $\mathcal{B}$ is the configuration of base points of $\mathcal{P}_C$ and $\mathcal{D}=\bigcup_{j=1}^k \{p_{j\,\ell_j}\}_{\ell_j=1}^{s_j}$ is a configuration as we are going to describe. For a start, $p_{j\,1}\to p_n,$ for $1\leq j\leq k,$ and $p_{j\,i+1}\to p_{j\,i},$ for $1\leq i\leq s_{j}-1$ when $s_j>1$. The points $p_{j\,1}$ are free and arbitrarily chosen on $E_n$. Let $C_j$ be the curve  of $\mathcal{P}_C$ whose strict transform goes through the point $p_{j\,1}\in E_n$, then, the points $p_{j\,\ell_j}, 1\leq \ell_j \leq s_j$, are finitely many infinitely near free points through which the strict transforms of $C_j$ pass. For convenience, we re-index the points in $\mathcal{B}=\{p_{0\,\ell_0}\}_{\ell_0=1}^{s_0}$ as $p_{0\,\ell_0}:=p_{\ell_0}$ and $s_0=n$. Therefore, the configuration $\mathcal{C}$ can be written as:
\begin{equation}\label{eq_conf}\mathcal{C}=\{q_u\}_{u=1}^s=\bigcup_{j=0}^k\left[\{p_{j\,\ell_j}\}_{\ell_j=1}^{s_j}\right], \, \text{ where } s=\sum_{j=0}^ks_j.\end{equation}

From now on, we adapt the notation introduced in Section \ref{sec_preliminares} according to the indices of the points in $\mathcal{C}.$ Let $\widetilde{X}$ be the surface obtained by blowing-up $\PP^2$ at the points in $\mathcal{C}.$ For our purposes, it is useful to consider the strongly convex cone $P$ of $\mathrm{NS}_{\mathbb{R}}(\widetilde{X})$ generated by the following set of classes:
\begin{equation}\label{eq_original_cone}
    \Delta_P = \Big\{[\widetilde{L}], [\widetilde{C}_1], \ldots, [\widetilde{C}_k], [\widetilde{E}_{0\,1}], \ldots[\widetilde{E}_{0\,s_0}], \ldots, [\widetilde{E}_{k\,1}],\ldots, [\widetilde{E}_{k\,s_k}]\Big\}.
\end{equation}
Proposition \ref{prop_dual} will be our first result in this section and it will describe the generators of the dual cone of $P$:
$$
P^\vee:=\big\{[D]\in\text{NS}_{\mathbb{R}}(\widetilde{X}) \ | \ [D]\cdot [G]\geq 0, \text{ for all }[G]\in P\big\}.
$$
To state our result, it is convenient to introduce {\it two} families of divisors and an example that illustrates the nature of the divisors.

Regarding our {\it first} family of divisors, we set
\begin{equation}\label{eq_div_D_0_m0}
\begin{array}{cl}
    D_{0\,0}&:=E_{0\,0}^*,\\
D_{0\,m(0)}&:=(\varphi_L,\varphi_{p_{0\,m(0)}})_p\,E_{0\,0}^* -\sum_{\ell_0=1}^{m(0)}\operatorname{mult}_{p_{0\,\ell_0}}(\varphi_{0\,m(0)})E_{0\,\ell_0}^*,\\
&\text{for every index }m(0),\, 1\leq m(0)\leq s_0=n,
\end{array}
\end{equation}
and, for indices $1\leq j\leq k$ and $1\leq m(j)\leq s_j$,
$$
D_{j\,m(j)}:=(\varphi_L,\varphi_{p_{j\,m(j)}})_p\,E_{0\,0}^* -\sum_{\ell_0=1}^{s_0}\operatorname{mult}_{p_{0\,\ell_0}}(\varphi_{0\,m(j)})E_{0\,\ell_0}^*-\sum_{\ell_j=1}^{m(j)}\operatorname{mult}_{p_{j\,\ell_j}}(\varphi_{j\,m(j)})E_{j\,\ell_j}^*.
$$

For convenience, we assume that $0\leq m(0)\leq s_0=n$. In particular, $D_{0\,m(0)}=D_{0\,0}=E_{0\,0}^*$. Moreover, for $1\leq j \leq k$, $D_{j\,s_j}$ (respectively, $D_{j\, m(j)}$) is numerically equivalent to the strict transform of $C_j$ on the surface $\widetilde{X}$ (respectively, $\widetilde{C}_j|_{j\,m(j)}$ on $X_{j\,m(j)}$). Furthermore, by definition (and using Lemma \ref{LemmaCamp} where necessary), the following equalities and inequalities hold:
\begin{equation}\label{eqn_Ds_rela}
\begin{array}{ll}
    D_{0\,m(0)}\cdot D_{j\,m(j)}=D_{0\,m(0)}\cdot D_{0\,s_0}>0,&\text{ if }1\leq j\leq k \text{ and } 0\leq m(0)\leq s_0-1,\\[2mm]
    D_{0\,m(0)}^2= D_{0\,m(0)}\cdot  D_{0\,m(0)}=0,& \text{ if }m(0)=s_0,\\[2mm]
    D_{j\,m(j)}\cdot D_{j\,m'(j)}=-\min\{m(j),m'(j)\},&\text{ if }1\leq j\leq k\text{ and } 1\leq m(j),m'(j)\leq s_j,\\[2mm]
    D_{j_1\,m(j_1)}\cdot D_{j_2\,m(j_2)}=0,&\text{ if }1\leq j_1\neq j_2\leq k.\\[2mm]
\end{array}
\end{equation}
Next, we introduce the {\it second} family of divisors. Consider the set of subsets
$$\mathcal{I}:=\big\{J=\{j_1<\cdots<j_c\}\subseteq\{1,\ldots,k\} \ | \ 1\leq c\leq k\big\}.$$
Given  $J\in\mathcal{I},$ define $\mathcal{S}_J:=\{1,\ldots,s_{j_1}\}\times \cdots \times \{1,\ldots,s_{j_c}\}$. An element in $\mathcal{S}_J$ is written as $\bar{\ell}_J=(\ell_{j_1},\ldots,\ell_{j_c}).$ In addition, setting  $\Pi_{\bar{\ell}_J}:=\ell_{j_1}\cdots \ell_{j_c}$ and $a_{m(0)}:=D_{0\,m(0)}\cdot D_{0\, s_0}$, we define, for $0\leq m(0)\leq s_0-1$, the divisor $D_{0\,m(0)}^{J,\,\bar{\ell}_J}$ as
\begin{equation}\label{eq_div_J_l_J}
D_{0\,m(0)}^{J,\,\bar{\ell}_J}:=a_{m(0)}\Bigg[\sum_{j_t\in J} \left(\dfrac{\Pi_{\bar{\ell}_J}}{\ell_{j_t}}\cdot D_{j_t\,\ell_{j_t}}\right)\Bigg]+\Pi_{\bar{\ell}_J}\cdot D_{0\,m(0)}, \text{ where } 1\leq t\leq c.
\end{equation}
Note that, when $m(0)=s_0$, $a_{m(0)}=0$ by \eqref{eqn_Ds_rela}, and the divisor $D_{0\,m(0)}^{J,\,\bar{\ell}_J}$ is a multiple of $D_{0\,s_0}.$

\medskip

We now present an example to clarify the expressions for the above given divisors. In this example, we consider a curve having one place at infinity whose singularity at infinity is a cusp. This example will be continued in Example \ref{exa_cono_nef_curvas}, where we describe the generators of the nef cone and the cone of curves of the associated surface $\widetilde{X}$.

\begin{example}\label{ex_cusp_not}
We use the coordinates established at the beginning of Section \ref{sec_preliminares}. Let $C$ be the curve  on $\mathbb{P}^2$ given locally by the equation $x-y^3=0$ in the affine chart $\mathcal{Z}\neq0$. Note that $C$ is defined by the local equation $v^2-u^3=0$ in coordinates $(u,v)$ at the point at infinity. As before, $L$ is the line at infinity. The pencil associated with $C$ is generated by $C$ and $3L$; The elimination of its base points requires six blowups. Thus, following the previous notation, $n=s_0=6$ and $\mathcal{B}=\{p_{0\,\ell_0}\}_{\ell_0=1}^6$ where $p_{0\,\ell_0}\to p_{0\,\ell_0-1}$ for all $\ell_0\in\{2,\ldots,6\}$ and $p_{0\,3}\to p_{0\,1}$. On the surface $X_6$ we take two free points $p_{1\,1},p_{2\,1}\in E_{0\,6}$ through which the strict transforms of two curves $C_1$ and $C_2$ of $\mathcal{P}_C$ pass. Additionally, we consider the free point $p_{1\,2}$ proximate to $p_{1\,1}$ through which $\widetilde{C}_1$ passes. The configuration we consider to define a surface $\widetilde{X}$ is  $\conf=\{q_i\}_{i=1}^{9}=\{p_{0\,\ell_0}\}_{\ell_0=1}^6\cup\{p_{1\,\ell_1}\}_{\ell_1=1}^2\cup\{p_{2\,1}\}$. The dual graph of the composition of blowups given by $\conf$ is shown in Figure \ref{fig_cusp}. In the figure, the vertices corresponding to $E_{p_{0\,\ell_0}},$ for $1\leq \ell_0\leq 5,$  (respectively, $E_{p_{0\,6}}$, $E_{p_{j\,\ell_j}},$ for $j=1,2$) are coloured in black (respectively, red, blue). Under the notation described above, $n=s_0=6,\,k=2,\,s_1=2,\,s_2=1$ and $s=9$.

\begin{center}
\begin{figure}[h!]
    \centering
    \definecolor{rvwvcq}{rgb}{0.08235294117647059,0.396078431372549,0.7529411764705882}
\definecolor{ffqqqq}{rgb}{1,0,0}
\begin{tikzpicture}[line cap=round,line join=round,>=triangle 45,x=1.5cm,y=1.5cm]
\clip(-0.5,-0.5) rectangle (6.5,2.5);
\draw [line width=1pt] (0,1)-- (5,1);
\draw [line width=1pt] (4,1)-- (5,2);
\draw [line width=1pt] (5,2)-- (6,2);
\draw [line width=1pt] (1,1)-- (1,0);
\draw (-0.2,1.5) node[anchor=north west] {$E_{p_{0\,1}}$};
\draw (1,0.4) node[anchor=north west] {$E_{p_{0\,2}}$};
\draw (0.8,1.5) node[anchor=north west] {$E_{p_{0\,3}}$};
\draw (1.8,1.5) node[anchor=north west] {$E_{p_{0\,4}}$};
\draw (2.8,1.5) node[anchor=north west] {$E_{p_{0\,5}}$};
\draw (3.8,0.9) node[anchor=north west] {$E_{p_{0\,6}}$};
\draw (4.8,2.5) node[anchor=north west] {$E_{p_{1\,1}}$};
\draw (5.8,2.5) node[anchor=north west] {$E_{p_{1\,2}}$};
\draw (4.8,1.5) node[anchor=north west] {$E_{p_{2\,1}}$};
\begin{scriptsize}
\draw [fill=black] (0,1) circle (2.5pt);
\draw [fill=black] (1,1) circle (2.5pt);
\draw [fill=black] (1,0) circle (2.5pt);
\draw [fill=black] (2,1) circle (2.5pt);
\draw [fill=black] (3,1) circle (2.5pt);
\draw [fill=ffqqqq] (4,1) circle (2.5pt);
\draw [fill=rvwvcq] (5,1) circle (2.5pt);
\draw [fill=rvwvcq] (5,2) circle (2.5pt);
\draw [fill=rvwvcq] (6,2) circle (2.5pt);
\end{scriptsize}
\end{tikzpicture}

    \caption{Dual graph of $\conf$}
    \label{fig_cusp}
\end{figure}
\end{center}

According to its definition, the explicit expressions of the divisors $D_{j\,m(j)}$, $j\in\{0,1,2\}$, in our {\it first} family are:
	\begin{enumerate}
		\item $D_{0\,0}=E_{0\,0}^*$.
		\item $D_{0\,1}=E_{0\,0}^*-E_{0\,1}^*$.
		\item $D_{0\,2}=2E_{0\,0}^*-E_{0\,1}^*-E_{0\,2}^*$.
		\item $D_{0\,3}=3E_{0\,0}^*-2E_{0\,1}^*-E_{0\,2}^*-E_{0\,3}^*$.
		\item $D_{0\,4}=3E_{0\,0}^*-2E_{0\,1}^*-E_{0\,2}^*-E_{0\,3}^*-E_{0\,4}^*$.
		\item $D_{0\,5}=3E_{0\,0}^*-2E_{0\,1}^*-E_{0\,2}^*-E_{0\,3}^*-E_{0\,4}^*-E_{0\,5}^*$.
		\item $D_{0\,6}=3E_{0\,0}^*-2E_{0\,1}^*-E_{0\,2}^*-E_{0\,3}^*-E_{0\,4}^*-E_{0\,5}^*-E_{0\,6}$.
		\item $D_{1\,1}=3E_{0\,0}^*-2E_{0\,1}^*-E_{0\,2}^*-E_{0\,3}^*-E_{0\,4}^*-E_{0\,5}^*-E_{0\,6}-E_{1\,1}^*$.
		\item $D_{1\,2}=3E_{0\,0}^*-2E_{0\,1}^*-E_{0\,2}^*-E_{0\,3}^*-E_{0\,4}^*-E_{0\,5}^*-E_{0\,6}-E_{1\,1}^*-E_{1\,2}^*$.
		\item $D_{2\,1}=3E_{0\,0}^*-2E_{0\,1}^*-E_{0\,2}^*-E_{0\,3}^*-E_{0\,4}^*-E_{0\,5}^*-E_{0\,6}-E_{2\,1}^*$.
	\end{enumerate}

\medskip

Finally let us show the expressions for the {\it second} family of divisors $D_{0\,m(0)}^{J,\bar{\ell}_J}$. In this case,
$$\mathcal{I}=\{\{1\},\{2\},\{1,2\}\}\text{ and } a_0=3,\,a_1=1,\,a_2=3,\,a_3=3,\,a_4=2\text{ and }a_5=1,$$
and, according to the chosen set $J$, our divisors are the following ones:
    \begin{enumerate}
		\item For $J=\{1\}$, $\mathcal{S}_{J}=\{1,2\}$ and then
        $$D^{\{1\},\bar{\ell}_1}_{0\,m(0)}=a_{m(0)}D_{1,\bar{\ell}_1} +D_{0\, m(0)},\, 1\leq \bar{\ell}_1\leq 2,\,1\leq m(0)\leq 5,$$
        where we set $\bar{\ell}_{\{1\}}=\bar{\ell}_1$ for convenience.
		\item For $J=\{2\}$, $\mathcal{S}_J=\{1\}$, $\bar{\ell}_J=1$ and $$D^{\{2\},1}_{0\,m(0)}=a_{m(0)}D_{2,1} +D_{0\, m(0)}, 1\leq m(0)\leq 5.$$
		\item It remains to consider $J=\{1,2\}$, then $\mathcal{S}_J=\{1,2\}\times\{1\}$, $ \bar{\ell}_J\in\{(1,1),(2,1)\}$, and the divisors are:
        \medskip
		\begin{enumerate}
			\item $D^{\{1,2\}\,(1,1)}_{0\,m(0)}=a_{m(0)}(D_{1\,1}+D_{2\,1})+D_{0\,m(0)}$,
			\item $D^{\{1,2\}\,(2,1)}_{0\,m(0)}=a_{m(0)}(D_{1\,2}+2D_{2\,1})+2D_{0\,m(0)}$,
		\end{enumerate}
        where $m(0)\in\{1,\ldots,5\}$.
	\end{enumerate}
\end{example}	

Recall that $P$ is the convex cone of $\mathrm{NS}_\R(\widetilde{X})$ generated by the classes in (\ref{eq_original_cone}) and $P^\vee$ is its dual cone. The following result proves that we have found generators for $P^\vee$.

\begin{proposition}\label{prop_dual}
        Keep the notation introduced in this section. The dual cone of $P$, $P^\vee$, is generated by the classes in $\operatorname{NS}_{\mathbb{R}}(\widetilde{X})$ of the following divisors:
        \begin{enumerate}
            \item[$\bullet$]  $D_{0\,m(0)}, \text{ where }m(0)\in\{0,\ldots,s_0\}$, and
            \item[$\bullet$]  $D_{0\,m(0)}^{J,\bar{\ell}_J}, \text{ where }J\in\mathcal{I}, \bar{\ell}_J\in\mathcal{S}_J \text{ and } m(0)\in\{0,\ldots,s_0-1\}.$
        \end{enumerate}
\end{proposition}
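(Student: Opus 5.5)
The plan is to compute $P^\vee$ by exploiting the structure of $\Delta_P$. The set $\Delta_P$ has $1+k+s$ elements, while $\dim \mathrm{NS}_\R(\widetilde{X})=1+s$. The $1+s$ classes $[\widetilde{L}],[\widetilde{E}_{j\,\ell_j}]$ form a basis, and it is unimodular up to the usual triangularity. The extra classes $[\widetilde{C}_j]=[D_{j\,s_j}]$ make $P$ non-simplicial. I would proceed in three steps.

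\emph{Step 1: the divisors are dual to the basis.} Check that the first-family divisors are essentially the dual basis to $\{[\widetilde{L}],[\widetilde{E}_{j\,\ell_j}]\}$. By construction, $D_{j\,m(j)}$ is the total transform of the class of a curvette $\varphi_{j\,m(j)}$ through the point $p_{j\,m(j)}$. Hence $D_{j\,m(j)}\cdot \widetilde{E}_{j'\,\ell}=\delta$ (it meets only $\widetilde{E}_{j\,m(j)}$, transversally). Similarly $D_{0\,0}\cdot\widetilde{L}=1$ and $D_{0\,0}\cdot \widetilde{E}=0$, while $D_{j\,m(j)}\cdot \widetilde{L}=0$ because the curvette lies in $\mathbb{A}^2$ and does not meet $L$ at infinity after the blow-ups. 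So the cone $Q$ generated by $\{[\widetilde{L}],[\widetilde{E}_{j\,\ell_j}]\}$ is simplicial, with $Q^\vee$ generated by $D_{0\,0}$ and all $D_{j\,m(j)}$, $0\le j\le k$. Since $P=Q+\langle [\widetilde C_1],\dots,[\widetilde C_k]\rangle$, we get
$$P^\vee=Q^\vee\cap\{x : x\cdot \widetilde{C}_j\ge 0,\ 1\le j\le k\}.$$

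\emph{Step 2: reduce to an explicit polyhedral computation.} Write $x=\sum_{m(0)}\alpha_{m(0)}D_{0\,m(0)}+\sum_{j\ge1}\sum_{m}\beta_{j\,m}D_{j\,m}$ with all coefficients nonnegative. By \eqref{eqn_Ds_rela}, with $\widetilde C_j=D_{j\,s_j}$, the $j$th condition reads
$$\sum_{m(0)<s_0}a_{m(0)}\alpha_{m(0)}-\sum_{m=1}^{s_j}m\,\beta_{j\,m}\ge 0 .$$
The cross terms vanish for $j'\neq j$, and the $D_{0\,s_0}$ coefficient drops out. So $P^\vee$ is the image of a product-type polyhedral cone in coordinates $(\alpha,\beta)$, and we must find its extreme rays. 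The rays $D_{0\,m(0)}$ (all $\beta=0$), including $D_{0\,s_0}$, clearly lie in $P^\vee$. Any other extreme ray should have exactly one nonzero $\alpha_{m(0)}$ with $m(0)<s_0$, say normalized to $\Pi_{\bar\ell_J}$. It should also have, for each $j$ in some subset $J$, exactly one nonzero $\beta_{j\,\ell_j}$ making the $j$th inequality tight, namely $\beta_{j\,\ell_j}=a_{m(0)}\Pi_{\bar\ell_J}/\ell_j$. This is exactly $D^{J,\bar\ell_J}_{0\,m(0)}$, so membership of the second family in $P^\vee$ is immediate.

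\emph{Step 3: show every element is a nonnegative combination (the main obstacle).} Given $x\in P^\vee$, I would argue by induction on the number of nonzero coordinates. If some $\beta_{j\,m}>0$, pick an $\alpha_{m(0)}>0$ with $m(0)<s_0$; it must exist, since $a_{m(0)}>0$ by \eqref{eqn_Ds_rela}. For each $j$ with some $\beta_{j\,\cdot}\neq0$, pick such an $\ell_j$, and subtract $t\,D^{J,\bar\ell_J}_{0\,m(0)}$ with $t$ maximal so that all coefficients stay $\ge0$ and the slack of every inequality stays $\ge0$. Note the subtraction reduces $\sum m\beta_{j\,m}$ and $\sum a\alpha$ by the same amount $t\,a_{m(0)}\Pi_{\bar\ell_J}$ for every $j\in J$. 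The inequalities for $j\in J$ therefore keep their slack, so the maximal $t$ is limited either by a $\beta$ or $\alpha$ hitting zero, or by the slack of some $j\notin J$. To avoid the latter, include in $J$ every $j$ with nonzero $\beta$'s. Then each step kills a coordinate, and the process terminates with a remainder of the form $\sum\alpha D_{0\,m(0)}$. The delicate point is ensuring that the slack for each $j\in J$ does not become negative when $\alpha_{m(0)}$ is exhausted before the $\beta$'s. This holds because the decrease in $\sum a\alpha$ and in each $\sum m\beta_j$ is identical, so each slack is preserved exactly.
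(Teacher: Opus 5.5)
Your proof is correct, and it takes a genuinely different route from the paper's.

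\textbf{The paper's route.} The paper uses the facet recipe from Fulton: every extremal ray of $P^\vee$ is orthogonal to a hyperplane spanned by $s$ elements of $\Delta_P$. It removes specific $(k+1)$-subsets from $\Delta_P$, for instance $\{[\widetilde L],[\widetilde C_1],\dots,[\widetilde C_k]\}$ or $\{[\widetilde E_{0\,m(0)}]\}\cup\{[\widetilde C_t]\}_{t\notin J}\cup\{[\widetilde E_{j_t\,\ell_{j_t}}]\}_{t}$. It then solves the resulting linear systems and asserts that all other choices give only the trivial solution.

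\textbf{Your route.} You observe that $[\widetilde L]$ and the $[\widetilde E_{j\,\ell_j}]$ form a basis. Its dual basis consists of $D_{0\,0}$ and all $D_{j\,m(j)}$, $0\le j\le k$, including those with $j\ge 1$, which are not in $P^\vee$. In these coordinates $P^\vee$ becomes the explicit cone cut out by $\alpha,\beta\ge 0$ and $\sum_{m(0)<s_0}a_{m(0)}\alpha_{m(0)}\ge\sum_m m\,\beta_{j\,m}$ for each $j$. Your greedy subtraction then proves generation. Each step preserves exactly the slack of every constraint indexed by $J$. The constraints with $j\notin J$ reduce to $\sum a\alpha\ge 0$. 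The number of nonzero coordinates strictly drops.

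\textbf{What each approach buys.} Your route leaves no unexamined cases and explains the shape of the generators. Normalize $\sum_{m(0)<s_0}a_{m(0)}\alpha_{m(0)}=1$. Then $P^\vee$ is the sum of the ray through $D_{0\,s_0}$ and the cone over a product of simplices:
\begin{itemize}
\item one simplex with vertices $D_{0\,m(0)}/a_{m(0)}$, $0\le m(0)\le s_0-1$;
\item for each $j$, a simplex with vertices $0$ and $D_{j\,\ell}/\ell$, $1\le \ell\le s_j$.
\end{itemize}
The vertices of this product are exactly the classes $D^{J,\bar\ell_J}_{0\,m(0)}/(a_{m(0)}\Pi_{\bar\ell_J})$, with $J=\emptyset$ giving $D_{0\,m(0)}/a_{m(0)}$. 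This observation would replace the heuristic ``should'' in your Step 2. It also shows that no listed generator is redundant, which the paper's facet approach gives implicitly. The paper's method, for its part, is a generic algorithm valid for any finite generating set, at the cost of an omitted case analysis.

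\textbf{One small correction.} The reason $D_{j\,m(j)}\cdot\widetilde L=0$ is not that the curvette lies in $\mathbb{A}^2$; it is a germ at $p\in L$. The equality follows from Lemma \ref{lem_for_Noether}: the coefficient $(\varphi_L,\varphi_{j\,m(j)})_p$ of $E_{0\,0}^*$ equals $\sum_{i\le i_L}\operatorname{mult}_{p_{0\,i}}(\varphi_{j\,m(j)})$.
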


\begin{proof}
Section 1.2 of \cite{Ful2} proves that, to find the generators of $P^\vee$, it suffices to consider every linear subspace $\mathcal{H}$ of codimension one generated by elements in $\Delta_P$ and check if there exists an element in $P^\vee$ generating the orthogonal space of $\mathcal{H}$, $\mathcal{H}^\perp$, with respect to the intersection product.

Let \[
D=d_{0\,0}E_{0\,0}^*-\sum_{j=0}^{k}\sum_{\ell_j=1}^{s_j}d_{j\,\ell_j}E_{j\,\ell_j}^*\]
be a divisor whose class belongs to $P^\vee\subset\operatorname{NS}_{\mathbb{R}}(\widetilde{X})$ with $d_{j\,\ell_j}\in\mathbb{Z}_{\geq 0}$. The definition of $P^\vee$ forces the coefficients of $D$ to satisfy certain inequalities. First, $D\cdot \widetilde{E}_{j\,\ell_j}\geq 0,$ $0\leq j\leq k \text{ and }1\leq \ell_j\leq s_j$, which implies
\begin{equation}\label{eqn_DEi}
\begin{array}{cl}
   d_{0\,\ell_0}-\sum_{p_{0\,k}\to p_{0\,\ell_0}}d_{0\,k}\geq 0, & 0\leq \ell_0\leq s_0-1,\\[2mm] d_{0\,s_0}-\sum_{j=1}^kd_{j\,1}\geq 0,\\[2mm]
   d_{j\,\ell_j}-d_{j\,\ell_{j}+1}\geq 0, & 1\leq j\leq k,\\[2mm]
   d_{j\,s_j}\geq 0, & 1\leq j\leq k.
   \end{array}
\end{equation}
Second,
\begin{equation}\label{eqn_DL}
d_{0\,0}-\sum_{i=1}^{i_L} d_{0\,i_L}\geq 0,
\end{equation}
because $D\cdot \widetilde{L}\geq 0$. Finally, it must hold $D\cdot \widetilde{C}_j\geq 0,\,1\leq j\leq k,$ which, combined with the numerical equivalence $\widetilde{C}_j\sim D_{j\,s_j},$ shows that
\begin{equation}\label{eqn_DC}
d_{0\,0}\,(\varphi_L,\varphi_{j\,s_j})_p-\sum_{\ell_0=1}^{s_0}d_{0\,\ell_0}\operatorname{mult}_{p_{0\,\ell_0}}(\varphi_{j\,s_j})-\sum_{\ell_{j}=1}^{s_j}d_{j\,\ell_j}\geq 0.
\end{equation}

Now we compute the possible values for $d_{j\,\ell_j}$. Recall that $\operatorname{NS}_{\mathbb{R}}(\widetilde{X})$ is a $\mathbb{R}$-vector space of dimension $s+1.$ Let $A$ be a generating set of a linear space $\mathcal{H}= \mathrm{span}(A)$ as above, that is, $A$ is a subset of cardinality $s$ of the set of generators $\Delta_P$  of $P$ given in (\ref{eq_original_cone}). Clearly, $A= \Delta_P\setminus B$, where $B\subseteq \Delta_P$ is a set of $k+1$ elements. Then, each element in $A$ yields equalities whose expressions are of the type of those given in \eqref{eqn_DEi}, \eqref{eqn_DL} or \eqref{eqn_DC} but replacing $\geq$ with $=$.

We now examine the expressions for the divisors $D$ according to the considered sets $A$. Note that if $[\widetilde{E}_{0\,s_0}]\in B$ and $D\cdot \widetilde{E}_{0\,s_0}>0$, then $\mathrm{span}(A)^\perp=\mathrm{span}([D_{0\,s_0}])$. We, therefore, focus on the subsets $B$ of cardinality $k+1$
that not contain $[\widetilde{E}_{0\,s_0}]$.

First, assume $$A=\Delta_P\setminus\left\{\left[\widetilde{L}\right],\left[\widetilde{C}_1\right],\ldots,\left[\widetilde{C}_k\right]\right\}.$$
Solving the resulting linear system of equations, we find that $\mathrm{span}(A)^\perp=\mathrm{span}\left([D_{0\,0}]\right)$. Here, the orthogonal space is computed by solving a linear system of equations whose solutions must also strictly satisfy those inequalities in \eqref{eqn_DEi}, \eqref{eqn_DL} and \eqref{eqn_DC} that are not equalities. This is performed in order to guarantee that our solutions are in $P^\vee$.

\medskip

Similarly, for $1\leq m(0)\leq s_0 -1$,  let
$$
A=\Delta_P \setminus\left\{\left[\widetilde{E}_{0\,m(0)}\right],\left[\widetilde{C}_1\right],\ldots,\left[\widetilde{C}_k\right]\right\}.
$$

Then, $\mathrm{span}(A)^\perp=\mathrm{span}\left([D_{0\,m(0)}]\right),$ where $\mathrm{span}(A)^\perp$ should be understood in the above sense.

\medskip

Finally, let $J\in\mathcal{I}$ and consider the set $\mathcal{S}_J$ defined before Example \ref{ex_cusp_not}. For each $\bar{\ell}_J=(\ell_{j_1},\ldots,\ell_{j_c})\in\mathcal{S}_J$, we consider the linear subspaces generated by $A_1=\Delta_P \,\setminus B_1$, where
$$
B_1=\left\{\left[\widetilde{L}\right]\right\}\cup\left\{\left[\widetilde{C}_t\right]\right\}_{t\not\in J}\cup\left\{\left[\widetilde{E}_{j_t\,\ell_{j_t}}\right]\right\}_{t=1}^c,
$$
and, for each $0\leq m(0)\leq s_0-1,$ those generated by $ \,A_{2\,m(0)}=\Delta_P\,\setminus B_{2\,m(0)},$ where
$$
B_{2\,m(0)}=\left\{\left[\widetilde{E}_{0\,m(0)}\right]\right\}\cup\left\{\left[\widetilde{C}_t\right]\right\}_{t\not\in J}\cup\left\{\left[\widetilde{E}_{j_t\,\ell_{j_t}}\right]\right\}_{t=1}^c.
$$
Considering the related systems of linear equations, we obtain $$\mathrm{span}(A_1)^\perp=\mathrm{span}\left(\left[D_{0\,0}^{J\,\bar{\ell}_J}\right]\right) \text{ and } \mathrm{span}(A_{2\,m(0)})^\perp=\mathrm{span}\left(\left[D_{0\,m(0)}^{J\,\bar{\ell}_J}\right]\right).$$

To conclude, the remaining choices for $A$ do not provide additional generators of $P^\vee$, as the corresponding systems of linear equations yield only the trivial solution.

\end{proof}

The above given generators of $P^\vee$ satisfy a noteworthy property as stated in the following result.

\begin{proposition}\label{prop_autointer}
    Keep the notation introduced above. The generators of the dual cone $P^\vee$ provided in Proposition \ref{prop_dual} have nonnegative self-intersection.
\end{proposition}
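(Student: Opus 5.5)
The plan is to treat the two families of generators separately and compute their self-intersections directly from the intersection table \eqref{eqn_Ds_rela}, together with the definitions \eqref{eq_div_D_0_m0} and \eqref{eq_div_J_l_J}.

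\emph{First family.} For $D_{0\,0}=E_{0\,0}^*$ we have $D_{0\,0}^2=1>0$. For $1\le m(0)\le s_0$, write $\varphi:=\varphi_{0\,m(0)}$. Since $E_{0\,0}^*$ and the $E_{0\,\ell}^*$ are orthogonal, with $(E_{0\,0}^*)^2=1$ and $(E_{0\,\ell}^*)^2=-1$, we get
\[
D_{0\,m(0)}^2=(\varphi_L,\varphi)_p^2-\sum_{\ell_0=1}^{m(0)}\operatorname{mult}_{p_{0\,\ell_0}}(\varphi)^2 .
\]
By the Noether formula (Lemma \ref{lem_for_Noether}), applied to two generic germs of type $\varphi_{m(0)}$, the sum equals $(\varphi_{m(0)},\varphi_{m(0)})_p$. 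Hence
\[
D_{0\,m(0)}^2=(\varphi_L,\varphi_{m(0)})_p^2-(\varphi_{m(0)},\varphi_{m(0)})_p .
\]
The case $m(0)=s_0=n$ gives $0$, which is already recorded in \eqref{eqn_Ds_rela}. For $r=m(0)<n$, I will show positivity with a chain of inequalities. Since the strict transform of $\varphi_r$ on $X_n$ passes through the base points after $p_r$ only partially, the Noether formula gives $(\varphi_r,\varphi_r)_p\le(\varphi_r,\varphi_n)_p$: the terms for points up to $p_r$ coincide, and $\varphi_n$ has multiplicity at least that of $\varphi_r$ at those points. Lemma \ref{LemmaCamp}(ii) then gives $(\varphi_r,\varphi_n)_p<(\varphi_L,\varphi_n)_p(\varphi_L,\varphi_r)_p$. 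This is not enough, because I need $(\varphi_L,\varphi_r)_p^2$ rather than $(\varphi_L,\varphi_n)_p(\varphi_L,\varphi_r)_p$. The cleaner route is to note that $D_{0\,r}$ is the class of the pullback of a general member of the complete pencil/system determined by $\varphi_r$, which moves. An alternative is that $D_{0\,r}$ is nef on $X_r$ by Theorem \ref{thm_Campillo}: it is a generator of the dual cone for the configuration truncated at $p_r$, and for $X_r$ the cone of curves is still generated by $\widetilde L$ and the $\widetilde E_i$, which is the content of Lemma \ref{LemmaCamp}. A nef divisor has nonnegative self-intersection. So I will argue that $D_{0\,r}$ meets $\widetilde L$ and every $\widetilde E_i$ nonnegatively: this holds with zeros except at $E_{0\,r}$, where the value is $1$, and at $L$, where the value is $(\varphi_L,\varphi_r)_p-\sum\operatorname{mult}(\varphi_L)\operatorname{mult}(\varphi_r)\ge0$ by Noether. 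Nonnegativity of $D_{0\,r}^2$ then follows, provided the cone of curves of $X_r$ is generated by these curves. That is the delicate point. If it is not available, I fall back on the direct computation using the maximal contact values as in subcase (c) of Lemma \ref{LemmaCamp}.

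\emph{Second family.} Expand \eqref{eq_div_J_l_J} using \eqref{eqn_Ds_rela}. Write $\Pi=\Pi_{\bar\ell_J}$ and $a=a_{m(0)}$. The cross terms between distinct $j_t$ vanish. Each square gives $D_{j_t\,\ell_{j_t}}^2=-\ell_{j_t}$, and $D_{0\,m(0)}\cdot D_{j_t\,\ell_{j_t}}=a$. Therefore
\[
\bigl(D^{J,\bar\ell_J}_{0\,m(0)}\bigr)^2=a^2\sum_t\frac{\Pi^2}{\ell_{j_t}^2}(-\ell_{j_t})+2a\Pi\sum_t\frac{\Pi}{\ell_{j_t}}\,a+\Pi^2D_{0\,m(0)}^2
=a^2\Pi^2\sum_t\frac1{\ell_{j_t}}+\Pi^2D_{0\,m(0)}^2 ,
\]
which is $\ge0$ by the first part. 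The algebra here is routine. The main obstacle is the nonnegativity of $D_{0\,m(0)}^2$ for $m(0)<n$, and I would attack it first via nefness on $X_{m(0)}$.
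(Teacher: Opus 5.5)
Your computation for the second family is exactly the paper's, and the cases $D_{0\,0}^2=1$ and $D_{0\,s_0}^2=0$ are fine. The gap is in the only nontrivial step, $D_{0\,r}^2\ge 0$ for $1\le r<s_0$. The paper settles this in one line from $D_{0\,s_0}^2=0$ via \cite[Lemma 2]{GalMon2016}. As written, your proposal does not actually prove it:
\begin{itemize}
\item the chain of inequalities fails, as you observe yourself;
\item the ``moving system'' claim is unsupported, since you never exhibit a curve of degree $(\varphi_L,\varphi_r)_p$ with the multiplicities of $\varphi_r$;
\item the nefness argument rests on the claim that $\operatorname{NE}(X_r)$ is generated by $\widetilde L,\widetilde E_{0\,1},\dots,\widetilde E_{0\,r}$, which you attribute to Lemma \ref{LemmaCamp}.
\end{itemize}
That lemma is only a numerical inequality: item (ii) says precisely $a_r=D_{0\,r}\cdot D_{0\,s_0}>0$. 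Theorem \ref{thm_Campillo} is stated only for $X_n$. The fallback via maximal contact values is not carried out.

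The missing step is short: do not pass to $X_r$ at all. The divisor $D_{0\,r}$ on $X_n$ is the pullback of the same expression on $X_r$. By the projection formula, $D_{0\,r}\cdot\widetilde E_{0\,i}=0$ for $i>r$. Its intersections with $\widetilde E_{0\,i}$ for $i\le r$, and with $\widetilde L$, are the ones you computed: $\delta_{ir}$ by the proximity equalities for $\varphi_r$, and $0$ by the Noether formula. Theorem \ref{thm_Campillo} then shows that $D_{0\,r}$ is nef on $X_n$, hence $D_{0\,r}^2\ge 0$. Equivalently, $\operatorname{NE}(X_r)=\sigma_*\operatorname{NE}(X_n)$ for $\sigma\colon X_n\to X_r$, and this is what would justify your statement about $X_r$.

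With this fix your route is a valid alternative to the paper's. It replaces the external valuative lemma by the Campillo--Piltant--Reguera cone theorem already quoted in the paper. It also gives the stronger conclusion that every $D_{0\,m(0)}$ is nef on $X_n$, hence on $\widetilde X$. The paper's argument, by contrast, needs only $D_{0\,s_0}^2=0$ and uses no information about $\operatorname{NE}(X_n)$.
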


\begin{proof}

    We start by observing that, by the second equality in \eqref{eqn_Ds_rela}, $(D_{0\,s_0})^2=0$. Consequently, by \cite[Lemma 2]{GalMon2016}, $(D_{0\,m(0)})^2\geq 0$ for $0\leq m(0) \leq s_0-1$. To conclude, we show that the divisors $D_{0\,m(0)}^{J,\bar{\ell}_J}$ also satisfy this condition. Indeed,
    \begin{align*}
\left(D_{0\,m(0)}^{J\,\bar{\ell}_J}\right)^2=&\left(a_{m(0)}\Bigg[\sum_{j_t\in J} \left(\dfrac{\Pi_{\bar{\ell}_J}}{\ell_{j_t}} D_{j_t\,\ell_{j_t}}\right)\Bigg]+\Pi_{\bar{\ell}_J}\cdot D_{0\,m(0)}\right)^2\\
        =&-a_{m(0)}^2\sum_{j_t\in J}\left(\dfrac{\Pi_{\bar{\ell}_J}}{\ell_{j_t}}\right)^2\ell_{j_t}+\left(\Pi_{\bar{\ell}_J}\cdot D_{0\,m(0)}\right)^2 \\ & + 2a_{m(0)}\,\Pi_{\bar{\ell}_J}\sum_{j_t\in J}\dfrac{\Pi_{\bar{\ell}_J}}{\ell_{j_t}}D_{0\,s_0}\cdot D_{0\,m(0)}\\
        =&\, (\Pi_{\bar{\ell}_J})^2\left(D_{0\,m(0)}^2+ a_{m(0)}^2\sum_{j_t\in J}\frac{1}{\ell_{j_t}}\right)\geq 0,
    \end{align*}
    where the second equality follows from  \eqref{eqn_Ds_rela} and the last one comes from the definition of the value $a_{m(0)}=D_{0\,m(0)}\cdot D_{0\,s_0}.$
\end{proof}

To conclude this section we state and prove our first main result in this paper. It gives generators for the nef cone and the cone of curves of a surface obtained from a sequence of blowups determined by a configuration as that given in  \eqref{eq_conf}. This proves, as a consequence, that both cones are finitely generated.

\begin{theorem}\label{thm_conos}
    Let $C$ be a curve with one place at infinity and $\mathcal{P}_C$ the pencil associated to $C$. Let  $\mathcal{C}=\bigcup_{j=0}^k\left[\{p_{j\,\ell_j}\}_{\ell_j=1}^{s_j}\right]$ be a configuration as in \eqref{eq_conf} and let $\widetilde{X}$ be the rational surface obtained by the sequence of blowups $\pi:\widetilde{X}\to\mathbb{P}^2$ given by $\mathcal{C}$. Then, the cone of curves $\operatorname{NE}(\widetilde{X})$ is generated by the classes of the strict transforms  on $\widetilde{X}$ of: the line at infinity, the  curves $C_1, \ldots, C_k$ in $\mathcal{P}_C$ whose strict transforms  go through $p_{1\,1},\ldots,p_{k\,1}\in\mathcal{C}$ and the exceptional divisors of $\pi$. Additionally, the nef cone $\operatorname{Nef}(\widetilde{X})$ is generated by the classes of the divisors considered in the statement of Proposition \ref{prop_dual}.
\end{theorem}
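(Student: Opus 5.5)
The strategy is to prove $\operatorname{NE}(\widetilde{X})=P$, where $P$ is the cone generated by $\Delta_P$ in \eqref{eq_original_cone}. The nef statement then follows by duality. Indeed, $\operatorname{Nef}(\widetilde{X})$ is the dual of $\overline{\operatorname{NE}}(\widetilde{X})$. If $\operatorname{NE}(\widetilde{X})=P$ is polyhedral, and hence closed, then $\operatorname{Nef}(\widetilde{X})=P^\vee$, which Proposition \ref{prop_dual} already describes by explicit generators. The inclusion $P\subseteq \operatorname{NE}(\widetilde{X})$ is obvious, since every element of $\Delta_P$ is the class of an irreducible curve on $\widetilde{X}$. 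So the whole content lies in the reverse inclusion.

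To show $\operatorname{NE}(\widetilde{X})\subseteq P$, I would prove $P^\vee\subseteq \operatorname{Nef}(\widetilde{X})$. Then $\overline{\operatorname{NE}}(\widetilde{X})=\operatorname{Nef}(\widetilde{X})^\vee\subseteq (P^\vee)^\vee=P$, because $P$ is closed and convex. By Proposition \ref{prop_dual}, it suffices to check that each generator $D$ of $P^\vee$ (the $D_{0\,m(0)}$ and the $D_{0\,m(0)}^{J,\bar{\ell}_J}$) is nef, i.e. $D\cdot \Gamma\ge 0$ for every irreducible curve $\Gamma\subset\widetilde{X}$.

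If $[\Gamma]\in\Delta_P$, then $D\cdot\Gamma\ge 0$ holds because $D\in P^\vee$. For any other irreducible curve $\Gamma$, I would use Proposition \ref{prop_autointer}, which gives $D^2\ge 0$, together with the effectivity of a multiple of $D$ as a sum of elements of $\Delta_P$. The key point is that an effective divisor $D'$ with $D'\equiv rD$ and support in curves of $\Delta_P$ satisfies $\Gamma\cdot D'\ge 0$ whenever $\Gamma$ is irreducible and not in the support. Concretely, $D_{0\,s_0}\equiv \widetilde{C}_j+\sum_{\ell} \ell\,\widetilde{E}_{j\,\ell}$-type expressions, and more simply $d\,E_{0\,0}^*=\widetilde{L}^{\,*}$ is the total transform of $L$. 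Expanding $E_{0\,0}^*$ and the $E^*_{j\,\ell}$ as nonnegative combinations of strict transforms, and using that the curves of $\mathcal{P}_C$ have classes in the span of $\widetilde{L}$, the $\widetilde{C}_j$ and the exceptional curves, I would write each generator $D$ up to a positive multiple as
\[
rD \equiv \alpha \widetilde{C}_{j}+\textstyle\sum (\text{nonnegative combination of }\widetilde{L},\widetilde{E}_{j\,\ell}),
\]
or as a combination of members of the pencil. In the latter case $D$ is represented by a moving class: a general member of $\mathcal{P}_C$ has strict transform $\equiv D_{0\,s_0}$, which has no fixed component since it is base point free on $X_n$ and the points of $\mathcal{D}$ lie on special members only. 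Then $D\cdot\Gamma\ge0$ for all $\Gamma$.

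The main obstacle is the coefficient check: producing, for each generator, an effective representative supported on $\Delta_P$ or on moving curves, with nonnegative coefficients. This is where Lemma \ref{LemmaCamp} enters. It guarantees that the coefficients of $\widetilde{L}$ in the expansion of $D_{0\,m(0)}$ are positive, since $(\varphi_L,\varphi_n)_p(\varphi_L,\varphi_r)_p>(\varphi_r,\varphi_n)_p$. As a fallback, I would use a uniform argument via $D^2\ge0$. If $D\cdot\Gamma<0$ for some irreducible $\Gamma$, then $\Gamma^2<0$. One then shows that $D$ restricted to $\widetilde{X}\setminus(\text{supports})$ forces $\Gamma$ into the finitely many negative curves. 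The candidates are the exceptional curves, $\widetilde{L}$ and the $\widetilde{C}_j$, because any other irreducible curve meets the nef, self-intersection-zero class $D_{0\,s_0}$ of the fibration $\widetilde{X}\to\mathbb{P}^1$ induced by the pencil. Such a curve is therefore either a component of a fiber or a multisection. In the multisection case, intersecting with the generators reduces to nonnegativity along fibers. Fiber components are exactly $\widetilde{C}_j$, $\widetilde{L}$ and the exceptional curves, since members of the pencil with $\lambda\ne0$ are irreducible.
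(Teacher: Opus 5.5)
\textbf{There is a genuine gap: the step that carries the whole theorem is left unproved.} Your reduction is sound. It suffices to show that each generator $D$ of $P^\vee$ from Proposition \ref{prop_dual} is nef. For that it is enough to know $D\in P$: then $D\cdot\Gamma\ge 0$ for $[\Gamma]\in\Delta_P$ because $D\in P^\vee$, and for every other irreducible $\Gamma$ because $D$ is a nonnegative combination of curves distinct from $\Gamma$. But the inclusion $P^\vee\subseteq P$ is the whole content of the theorem, and you leave it as ``the main obstacle''.

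Your appeal to Lemma \ref{LemmaCamp} does not supply it. Part (ii) says exactly that on $X_n$ the fibre class $D_{0\,s_0}\equiv d\widetilde L+\sum_{r<n}\bigl(d(\varphi_L,\varphi_r)_p-(\varphi_r,\varphi_n)_p\bigr)\widetilde E_{0\,r}$ has positive coefficients on the exceptional curves. (The coefficient of $\widetilde L$ is trivially $d$, not what the lemma controls.) So it only treats $m(0)=s_0$. For $1\le m(0)\le s_0-1$, membership of $D_{0\,m(0)}$ in $P$ needs $(\varphi_L,\varphi_{m(0)})_p(\varphi_L,\varphi_r)_p\ge(\varphi_{m(0)},\varphi_r)_p$ for all $r$, which you never address.

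Your fallback fails at the point that matters. Nothing you say prevents a horizontal curve of the pencil fibration from having negative self-intersection; the section $\widetilde E_{0\,s_0}$ is one. ``Reduces to nonnegativity along fibres'' is not an argument.

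The missing idea is the one the paper uses, and it is why Proposition \ref{prop_autointer} is there. All generators of $P^\vee$ have $D^2\ge 0$ and $D\cdot E_{0\,0}^*>0$, so $P^\vee\subseteq Q(\widetilde X)=\{x : x^2\ge 0,\ H\cdot x\ge 0\}$. By the Hodge index theorem, two classes in $Q(\widetilde X)$ have nonnegative product, so $Q(\widetilde X)\subseteq Q(\widetilde X)^\vee\subseteq (P^\vee)^\vee=P$. An irreducible curve $\Gamma$ with $[\Gamma]\notin\Delta_P$ meets every element of $\Delta_P$ nonnegatively. Hence $[\Gamma]\in P^\vee\subseteq P$, and $\operatorname{NE}(\widetilde X)=P$ follows. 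This also repairs your fallback, since such a $\Gamma$ has $\Gamma^2\ge 0$.

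Alternatively, your own route can be closed without Hodge index:
\begin{enumerate}
\item By Theorem \ref{thm_Campillo} and Proposition \ref{prop_dual} with $k=0$, each $D_{0\,m(0)}$ is nef on $X_n$. Hence it lies in $\operatorname{NE}(X_n)$, which is generated by $\widetilde L$ and the $\widetilde E_{0\,i}$, and its pullback to $\widetilde X$ lies in $P$.
\item $D_{j\,\ell}\equiv\widetilde C_j+\sum_{t>\ell}(t-\ell)\widetilde E_{j\,t}\in P$.
\item $a_{m(0)}>0$, so every $D_{0\,m(0)}^{J,\bar\ell_J}$ lies in $P$ as well.
\end{enumerate}
One of these two ingredients must be supplied for the proof to be complete.
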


\begin{proof}
    Keep the above notation. Recall that $P$ is the convex cone of $\operatorname{NS}_{\mathbb{R}}(\widetilde{X})$ spanned by the classes \[
    [\widetilde{L}], [\widetilde{C}_1], \ldots, [\widetilde{C}_k], [\widetilde{E}_{0,1}], \ldots[\widetilde{E}_{0,s_0}], \ldots, [\widetilde{E}_{k,1}], \ldots,[\widetilde{E}_{k,s_k}]\]
    and, by Proposition \ref{prop_dual}, $P^\vee\subseteq\operatorname{NS}_{\mathbb{R}}(\widetilde{X})$ is spanned by the classes of the divisors $D_{0\,m(0)}$, $0\leq m(0)\leq s_0$, defined in \eqref{eq_div_D_0_m0}  and the divisors $D_{0\,m(0)}^{J,\bar{\ell}_J}$ defined in \eqref{eq_div_J_l_J}. Set $H$ an ample divisor on $\widetilde{X}$ and consider the convex cone
    \[
    Q(\widetilde{X}):=\{x\in\operatorname{NS}_{\mathbb{R}}(\widetilde{X})\,|\,x^2\geq 0 \text{ and }H\cdot x\geq 0\}.
    \]
    Let us prove the first statement. By Proposition \ref{prop_autointer}, $D\in Q(\widetilde{X})$ for all $D\in P^\vee$. Letting $\overline{\rm NE}(\widetilde{X})$ be the topological closure of ${\rm NE}(\widetilde{X})$ and $+$ the Minkowski sum, the inclusion
    $P+P^\vee\subseteq \overline{\rm NE}(\widetilde{X})$ holds because the generators of $P$ are curves and $P^\vee\subseteq Q(\widetilde{X})\subseteq \overline{\rm NE}(\widetilde{X})$ by \cite[Lemma 1.20]{KolMor}. Moreover, any irreducible curve on $\widetilde{X}$ other than the generators of $P$, must belongs to $P^\vee$. Thus, \[
    {\rm NE}(\widetilde{X})\subseteq P+P^\vee\subseteq\overline{\rm NE}(\widetilde{X}).
    \]
    Now, taking topological closures, $P=\bar{P}$ and $P^\vee=\overline{P^\vee}$
    because the involved cones are finitely generated, the equality $\overline{\rm NE}(\widetilde{X})=P+P^\vee$ holds.
    Note that $Q(\widetilde{X})\subseteq(P^\vee)^\vee=P$ by \cite[Lemma 1]{GalMon2016} and Proposition \ref{prop_autointer}. Therefore $P^\vee\subseteq P$ and $\overline{\rm NE}(\widetilde{X})=P$. Finally, since $P$ is finitely generated by effective classes, it holds that $P={\rm NE}(\widetilde{X})$.

    The second statement is straightforward from first one as ${\rm NE}(\widetilde{X})$ is finitely generated and thus  $\operatorname{Nef}(\widetilde{X})= P^\vee$.
\end{proof}

\begin{example}\label{exa_cono_nef_curvas}
    Consider the configuration $\conf$ in Example \ref{ex_cusp_not}. The dual graph associated with $\conf$ was depicted in Figure \ref{fig_cusp}. In this case, the cone of curves ${\rm NE}(\widetilde{X})$ is generated by the classes of the divisors in the following set:
    $$
    \left\{\widetilde{L},\widetilde{C}_1,\widetilde{C}_2,\widetilde{E}_{0\,1},\widetilde{E}_{0\,2},\widetilde{E}_{0\,3},\widetilde{E}_{0\,4},\widetilde{E}_{0\,5},\widetilde{E}_{0\,6},\widetilde{E}_{1\,1},\widetilde{E}_{1\,2},\widetilde{E}_{2\,1} \right\}.
    $$
    In addition, the nef cone $\operatorname{Nef}(\widetilde{X})$ is generated by the classes of the divisors $D_{0\,m(0)}$ and $D_{0\,m(0)}^{J,\bar{\ell}_J}$ introduced in Example \ref{ex_cusp_not}.
\end{example}

\section{Rational surfaces whose Cox ring is finitely generated}\label{sec:Cox_ring}

In this section we introduce a family of rational surfaces given by configurations as those introduced in \eqref{eq_conf}, and prove that the surfaces in this family have a finitely generated Cox ring.

\medskip

For a start, we recall the concepts of proximity matrix and $P$-sufficient configuration, which will be useful. Let $\mathcal{C}=\{q_i\}_{i=1}^t$ be a configuration over $\mathbb{P}^2$. The matrix $\textbf{P}_\conf=(p_{i\,j})_{1\leq i,j\leq t}$, where $p_{i\, i}=1,\, p_{i\,j}=-1$ when $q_i\to q_j,$ and $p_{i\,j}=0$ otherwise, is called  the \emph{proximity matrix} of $\conf$. Let $Z$ be the surface obtained by blowing-up at the points in $\conf$ and set $\mathbf{M}_\conf=(\textbf{P}^{t}_\conf)^{-1}=(m_{i\,j})_{1\leq i,j\leq t}$. For $i\in\{1,\ldots,t\}$, consider the divisor on $Z$, $Q_i:=\sum_{j=1}^t m_{i\,j}E_{j}^*$, and define the $t\times t$ matrix $\textbf{G}_\mathcal{C}=(g_{i\,j})$, $$g_{i\,j}=-9Q_i\cdot Q_j-(K_Z\cdot Q_i)(K_Z\cdot Q_j), \text{ }i,j\in\{ 1,\ldots,t\},$$ where $K_Z$ denotes a canonical divisor on $Z$. Note that $\textbf{G}_\mathcal{C}$ is symmetric.

\begin{definition} [{\cite[Definition 2]{GalMon2005-JA}}]
A configuration $\mathcal{C}$ as above is \emph{$P$-sufficient} if  $\textbf{x}\,G_\mathcal{C}\,\textbf{x}^t>0$ for all $\textbf{x}\in\mathbb{R}^t\setminus\{\textbf{0}\}$ with nonnegative coordinates.
\end{definition}

\begin{remark}\label{remark:P_suff}
According to \cite[Corollary 2]{GalMon2004}, if the configuration $\mathcal{C}$ is a chain, then $\mathcal{C}$ is $P$-sufficient if and only if the $(t,t)$-entry of $\textbf{G}_\conf,g_{t\,t},$ is positive. Moreover, in this last case, all the entries of $\textbf{G}_\conf$ are positive. Finally, a square matrix whose entries are nonnegative and whose diagonal entries are strictly positive satisfies the arithmetic condition involved in the definition of $P$-sufficiency.
\end{remark}

Next, we describe some consequences of $P$-sufficiency. We first recall that a divisor $D$  on a surface $Z$ is \emph{big} if the rational map $\phi_m:Z\dashrightarrow\mathbb{P}H^0(Z,\mathcal{O}_Z(mD))$ is birational onto its image for some $m>0$ (see \cite[Lemma 2.2.3]{Laz1} for an equivalent condition).

\begin{lemma}\cite[Lemma 3]{GalMon2005-JA}\label{prop_Psuf}
    Keep the notation as introduced before. Let $\conf$ be a $P$-sufficient configuration giving rise to a surface $Y$ and let $D$ be a nonzero effective divisor on $Y$ such that $D^2\geq 0$ and $D\cdot \widetilde{E}_{i}\geq 0$ for all $1\leq i\leq t$. Then $K_{Y}\cdot D<0 $.
\end{lemma}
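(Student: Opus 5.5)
We have to prove Lemma \ref{prop_Psuf}: if $\conf$ is $P$-sufficient and $D\neq 0$ is effective with $D^2\geq 0$ and $D\cdot\widetilde{E}_i\geq 0$ for all $i$, then $K_Y\cdot D<0$.

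The plan is to express $D$ in the basis $E_0^*,E_1^*,\dots,E_t^*$ and rewrite the hypotheses as linear and quadratic conditions on its coefficients, so that the quadratic form defined by $\mathbf{G}_\conf$ appears. Write $D\sim d_0E_0^*-\sum_{i=1}^t d_iE_i^*$, with $E_0^*$ the pull-back of a line. Put $\mathbf{d}=(d_1,\dots,d_t)$ and $\mathbf{x}:=\mathbf{d}\,\mathbf{P}_\conf^{t}$, so that $x_i=d_i-\sum_{q_j\to q_i}d_j=D\cdot\widetilde{E}_i\geq 0$. Then $\mathbf{d}=\mathbf{x}\,\mathbf{M}_\conf$, and hence
\[
\sum_i d_iE_i^*=\sum_i x_i Q_i,\qquad Q_i=\sum_j m_{i\,j}E_j^*.
\]
Thus $D\sim d_0E_0^*-\sum_i x_iQ_i$ with $\mathbf{x}\geq 0$.

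Next I compute the two relevant quantities in terms of $\mathbf{x}$. Since the $E_j^*$ are orthogonal to $E_0^*$ and $K_Y=-3E_0^*+\sum_j E_j^*$, I get $K_Y\cdot D=-3d_0+\sum_i x_i(-K_Y\cdot Q_i)$. Note that $Q_i\cdot E_0^*=0$ and $K_Y\cdot Q_i=-\sum_j m_{i\,j}$. So with $\kappa:=\sum_i x_i(K_Y\cdot Q_i)$ this reads $K_Y\cdot D=-3d_0-\kappa$. Also
\[
D^2=d_0^2+\Big(\sum_i x_iQ_i\Big)^2=d_0^2-\sigma,\qquad \sigma:=-\Big(\sum_i x_iQ_i\Big)^2=-\mathbf{x}\,(Q_i\cdot Q_j)\,\mathbf{x}^t\geq 0,
\]
where $\sigma\geq 0$ because the lattice spanned by the exceptional classes is negative definite.

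The key step combines these. By definition, $\mathbf{x}\,\mathbf{G}_\conf\,\mathbf{x}^t=9\sigma-\kappa^2$.

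If $\mathbf{x}\neq 0$, $P$-sufficiency gives $9\sigma>\kappa^2$. The hypothesis $D^2\geq 0$ gives $d_0^2\geq\sigma$, and since $D$ is effective, $d_0=D\cdot E_0^*\geq 0$. Hence $9d_0^2\geq 9\sigma>\kappa^2$, that is, $3d_0>|\kappa|\geq-\kappa$, and so $K_Y\cdot D=-3d_0-\kappa<0$.

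If $\mathbf{x}=0$, then $\mathbf{d}=0$ and $D\sim d_0E_0^*$. Since $D$ is effective and nonzero, $d_0>0$, and $K_Y\cdot D=-3d_0<0$.

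The main obstacle is getting the sign conventions consistent: that $x_i=D\cdot\widetilde{E}_i$ corresponds exactly to $\mathbf{d}\,\mathbf{P}^t$, so that $\mathbf{M}_\conf=(\mathbf{P}^t)^{-1}$ really produces $\sum d_iE_i^*=\sum x_iQ_i$, and that the $g_{i\,j}$ assemble into exactly $9\sigma-\kappa^2$. I would verify this against the proximity relation $\widetilde{E}_i=E_i^*-\sum_{q_j\to q_i}E_j^*$ before writing out the final argument.
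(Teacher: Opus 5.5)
Your argument is correct and is essentially the standard proof behind \cite[Lemma 3]{GalMon2005-JA}; the paper itself only cites that result. You write $D\sim d_0E_0^*-\sum_i x_iQ_i$ with $x_i=D\cdot\widetilde{E}_i\geq 0$, observe that $\mathbf{G}_\conf$ is built so that $\mathbf{x}\,\mathbf{G}_\conf\,\mathbf{x}^t=9\sigma-\kappa^2$, and then use $D^2\geq 0$ and $d_0\geq 0$ to force $3d_0>|\kappa|$. On the convention you flag: under the paper's literal definition of $\mathbf{P}_\conf$ (with $-1$ when $q_i\to q_j$), $\mathbf{d}\,\mathbf{P}_\conf^t$ is not the vector of the $D\cdot\widetilde{E}_i$, so it is cleanest to justify $\sum_i d_iE_i^*=\sum_i x_iQ_i$ directly from $Q_i\cdot\widetilde{E}_k=-\delta_{ik}$ and the nondegeneracy of the intersection form on the exceptional lattice.
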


\begin{theorem}\label{prop_Kbig}
    Keep the notation introduced before. If $\conf$ is a $P$-sufficient configuration, then  $-K_{Y}$ is a big divisor.
\end{theorem}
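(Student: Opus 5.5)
The plan is to use Zariski decomposition together with the bigness criterion that a divisor is big exactly when its positive part has positive self-intersection, or equivalently when $h^0(-mK_Y)$ grows quadratically in $m$.

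\textbf{Step 1 (effectivity and growth).} By Riemann--Roch on the rational surface $Y$,
\[
h^0(-mK_Y)\geq \chi(-mK_Y)=1+\tfrac{m(m+1)}{2}K_Y^2 .
\]
Here $K_Y^2=9-t$, which may be negative, so this bound alone is not enough. Instead I will show that $-K_Y$ is $\mathbb{Q}$-effective and then study its Zariski decomposition $-K_Y=\mathcal{P}+\mathcal{N}$. To get effectivity I will use $P$-sufficiency: the condition $\mathbf{x}\,\mathbf{G}_\conf\,\mathbf{x}^t>0$ is designed so that the classes $Q_i$ (which pair nonnegatively with the $\widetilde{E}_j$) behave well against $K_Y$. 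I would first recall from \cite{GalMon2005-JA} that $P$-sufficiency implies that $-K_Y\cdot D>0$ for suitable test divisors $D$. This is exactly Lemma~\ref{prop_Psuf}: $K_Y\cdot D<0$ for every nonzero effective $D$ with $D^2\geq 0$ and $D\cdot\widetilde{E}_i\geq0$.

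\textbf{Step 2 (use of Lemma~\ref{prop_Psuf} via duality).} I argue by contradiction, supposing $-K_Y$ is not big. Then $-K_Y$ lies outside the interior of the pseudoeffective cone $\overline{\mathrm{NE}}(Y)$. By duality with the nef cone, either $-K_Y$ is not pseudoeffective or it lies on the boundary. In both cases there is a nonzero nef class $D$ with $D\cdot(-K_Y)\leq 0$:
\begin{itemize}
\item If $-K_Y$ is not pseudoeffective, such a $D$ exists because $\mathrm{Nef}(Y)$ is the dual of $\overline{\mathrm{NE}}(Y)$.
\item If $-K_Y$ is on the boundary, take $D$ a nef class supporting the face of $\overline{\mathrm{NE}}(Y)$ containing $-K_Y$.
\end{itemize}
Such a $D$ satisfies $D^2\geq 0$ and $D\cdot\widetilde{E}_i\geq 0$. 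It is also $\mathbb{R}$-effective, since nef classes with $D^2\geq0$ lie in $\overline{\mathrm{NE}}(Y)$ \cite[Lemma 1.20]{KolMor}.

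\textbf{Step 3 (reduction to integral effective divisors).} The main obstacle is that Lemma~\ref{prop_Psuf} is stated for genuine nonzero effective divisors, not for limit classes. To handle this, I note that the condition $K_Y\cdot D<0$ on the closed cone $\{D\ \text{nef}\}$ minus the origin is an open condition that is preserved under positive scaling. So it suffices to prove the strict inequality on a compact slice. Rational nef classes are dense in the nef cone, and a rational nef class with $D^2>0$ is big, so some multiple of it is effective. Lemma~\ref{prop_Psuf} then gives $K_Y\cdot D<0$ on a dense subset, which yields only $K_Y\cdot D\leq 0$ on the closure.

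To obtain strictness I use the quadratic form directly. The strict positivity $\mathbf{x}\,\mathbf{G}_\conf\,\mathbf{x}^t>0$ on the nonnegative orthant is itself a strict, compact-slice condition. Writing $D=\alpha E_0^*-\sum x_iQ_i$ with $x_i\geq0$ (possible because $D\cdot\widetilde{E}_i\geq 0$), the inequality $9D^2\geq (K_Y\cdot D)^2$-type estimates coming from $\mathbf{G}_\conf$, combined with $D\cdot E_0^*\geq 0$, give $-K_Y\cdot D\geq c\,\|D\|>0$ uniformly. This contradicts Step 2.

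Hence $-K_Y$ is in the interior of $\overline{\mathrm{NE}}(Y)$, that is, $-K_Y$ is big.
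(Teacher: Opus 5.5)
Your argument is correct in substance, but it takes a different route from the paper.

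\textbf{The paper's route.} The paper first invokes \cite[Theorem 1]{GalMon2004} to know that $\operatorname{NE}(Y)$, and hence $\operatorname{Nef}(Y)$, is polyhedral. Its finitely many nef generators then lie in $\operatorname{NE}(Y)$, so they have effective multiples. Lemma~\ref{prop_Psuf} therefore applies to them verbatim, giving $-K_Y\cdot D>0$ on $\operatorname{Nef}(Y)\setminus\{0\}$, i.e.\ $-K_Y$ lies in the interior of $\operatorname{Nef}(Y)^\vee=\operatorname{NE}(Y)$.

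\textbf{Your route.} You bypass the effectivity hypothesis of Lemma~\ref{prop_Psuf} altogether. You observe that $P$-sufficiency is a condition on real nonnegative vectors, so the lemma's computation can be run on any real nef class, with $D\cdot E_0^*\geq 0$ replacing effectivity. This makes the argument independent of the polyhedrality theorem. It also makes your Step 1 and the density paragraph of Step 3 superfluous; you can delete them.

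\textbf{What you still need to write out.} The price of your route is that you must actually carry out that computation, and you only gesture at it. Moreover, the inequality you name, $9D^2\geq (K_Y\cdot D)^2$, is false in general: for the nef class $D=E_0^*-E_1^*$ one has $D^2=0$ but $K_Y\cdot D=-2$. The correct estimate separates the $E_0^*$-part of $D$ from its exceptional part. Set
\begin{itemize}
\item[] $\alpha=D\cdot E_0^*\geq 0$, $\quad x_i=D\cdot\widetilde{E}_i\geq 0$,
\item[] $q=(\sum_i x_iQ_i)^2$, $\quad u=\sum_i x_i\,(K_Y\cdot Q_i)$.
\end{itemize}
Then $D^2=\alpha^2+q\geq 0$ and $-K_Y\cdot D=3\alpha+u$. $P$-sufficiency gives $\mathbf{x}\,\mathbf{G}_\conf\,\mathbf{x}^t=-9q-u^2>0$ whenever $\mathbf{x}\neq 0$. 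Hence $9\alpha^2\geq -9q>u^2$, so $3\alpha>|u|$ and $-K_Y\cdot D>0$. If $\mathbf{x}=0$, then $D=\alpha E_0^*$ with $\alpha>0$, and $-K_Y\cdot D=3\alpha>0$.

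Strict positivity on every nonzero nef class already contradicts Step 2, so the uniform constant $c$ is not needed.
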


\begin{proof}
    By \cite[Theorem 1]{GalMon2004} the cone of curves $\operatorname{NE}(Y)$ is finitely generated and therefore the nef cone ${\rm Nef}(Y)$ is also finitely generated. For every $0\neq D\in{\rm Nef}(Y)$, we have that $D^2\geq 0$ and $D\cdot \widetilde{E}_{j\,i}\geq 0$. Note that ${\rm Nef}(Y)\subseteq\overline{{\rm NE}(Y)}={\rm NE}(Y)$. Hence, there exists a nonnegative integer $m$ such that $mD$ is effective and, by Lemma \ref{prop_Psuf}, $-K_{Y}\cdot D>0$. The facts that ${\rm Nef}(Y)^\vee={\rm NE}(Y)$ and
    $$[-K_{Y}]\in{\rm Nef}(Y)^\vee\setminus\{D|D\cdot N=0,\, N \in{\rm Nef}(Y)\}$$
    complete the proof.
\end{proof}

\begin{remark}\label{rem:Cox_ring}
    Let $Y$ be a rational surface given by a $P$-sufficient configuration, then Theorem \ref{prop_Kbig} and \cite[Theorem 1]{TesVarVel} prove that  $\operatorname{Cox}(Y)$ is finitely generated.
\end{remark}

Now consider the notation used in Section \ref{sec:nef_cone}. Set $\mathcal{C}=\{q_u\}_{u=1}^s= \bigcup_{j=0}^k\left[\{p_{j\,\ell_j}\}_{\ell_j=1}^{s_j}\right],$ $ \text{where } s=\sum_{j=0}^ks_j$, a configuration of infinitely near points as in \eqref{eq_conf}. The corres\-pondence between points $q$ and $p$ is as follows:
$$
\begin{array}{ll}
   q_{\ell_0}=p_{0\,\ell_0},  & 1\leq\ell_0\leq s_0, \\
    q_{s_0+\cdots +s_j+\ell_{j+1}}=p_{j+1\,\ell_{j+1}}, & 0\leq j\leq k-1,1\leq \ell_{j+1}\leq s_{j+1}.
\end{array}
$$
That is, the subindices of $q$ and $p$ are related by the bijection
$$\psi:(1,\ldots,s)\to (0\,1,\ldots,0\,s_0,1\,1,\ldots,1\,s_1,\ldots,k\,1,\ldots,k\,s_k).$$
Adopting our double notation, we write the before introduced matrix $\mathbf{M}_\conf=(\textbf{P}^{t}_\conf)^{-1}=\left(b_{u\,v}:=b_{\psi(u)}^{\psi(v)}\right)_{1\leq u,v\leq s}$. This allows us to define the divisors
$$\{Q_u\}_{u=1}^s=\bigcup_{j=0}^k\left[\{P_{j\,m(j)}\}_{m(j)=1}^{s_j}\right]$$ for $u=\psi^{-1}(j\,m(j))$ as:
$$
Q_{m(0)}:=Q_{\psi^{-1}(0\,m(0))}=\sum_{\ell_0=1}^{m(0)}b_{0\,m(0)}^{0\,\ell_0}E_{0\,m(0)}^*=:P_{0\,m(0)},
$$
$0\leq m(0)\leq s_0$, and, for $1\leq j\leq k$ and $1\leq m(j)\leq s_j,$
$$
Q_{\sum_{i=0}^{j-1}s_i+m(j)}=Q_{\psi^{-1}(j\,m(j))}:=\sum_{\ell_0=1}^{s_0}b_{j\,m(j)}^{0\,\ell_0}E_{0\,\ell_0}^*+\sum_{\ell_j=1}^{m(j)}b_{j\,m(j)}^{j\,\ell_j}E_{j\,\ell_j}^*=:P_{j\,m(j)}.
$$
Recalling the definition of germs $ \varphi_i$ before Lemma \ref{lem_for_Noether} and  following \cite[Corollary 3.1]{Lip1994}, it holds that $b_{j\,m(j)}^{j\,\ell_j}=\operatorname{mult}_{p_{j\,\ell_j}}(\varphi_{j\,m(j)})$, for $0\leq j\leq k$ and $1\leq \ell_j,m(j)\leq s_j.$ Consequently, the divisor $P_{j\,m(j)}$ coincides with the exceptional part of the divisor $D_{j\,m(j)}$ defined in \eqref{eq_div_D_0_m0} with the exception of its sign. We also adapt to our notation the previously introduced $s\times s$ matrix $\textbf{G}_\conf=\left(g_{u\,v}:=g_{\psi(u)}^{\psi(v)}\right)_{1\leq u,v\leq s}$. Here
\begin{equation}\label{eq:matrix_G_C}
  \begin{array}{rl}
  g_{u\,v} & := -9\, Q_{u} \cdot Q_{v} - (K_{\widetilde{X}} \cdot Q_{u})(K_{\widetilde{X}} \cdot Q_{v})\\
     & =  -9\, P_{j_1\,m(j_1)} \cdot P_{j_2\,m(j_2)} - (K_{\widetilde{X}} \cdot P_{j_1\,m(j_1)})(K_{\widetilde{X}}\cdot P_{j_2\,m(j_2)}),\\[2mm]
\end{array}
\end{equation}
with $\psi(u)=j_1\,m(j_1)$ and $\psi(v)=j_2\,m(j_2).$

\medskip

The surfaces we consider in this section are linked to the family of curves with a place at infinity we are going to define.

\begin{definition}\label{ams_type}
A projective curve $C$ on $\mathbb{P}^2$ is said to be of \emph{Abhyankar-Moh-Suzuki} type (AMS-type for short) if it has one place at infinity and it is rational and smooth in its affine part, that is, $C\setminus L$ is isomorphic to $\mathbb{C}$.
\end{definition}


Our main result in this section is the following one:

\begin{theorem}\label{thm:Cox_ring}
Let $C$ be an curve of AMS-type and degree $d$. Let $\mathcal{C}=\mathcal{B}\cup \mathcal{D}=\bigcup_{j=0}^k\left[\{p_{j\,\ell_j}\}_{\ell_j=1}^{s_j}\right]$ be a configuration of infinitely near points as introduced in \eqref{eq_conf}, where $\mathcal{B}=\{p_{0\,\ell_0}\}_{\ell_0=1}^{s_0}$ is the configuration of base points of the pencil $\mathcal{P}_C$ and $s_j\leq 2$ for all $1\leq j\leq k$. Then, the Cox ring of the surface $\widetilde{X}$ obtained by blowing-up at the points in $\mathcal{C}$ is finitely generated.
\end{theorem}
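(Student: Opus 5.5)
The plan is to reduce the statement to Remark \ref{rem:Cox_ring}. It suffices to show that the configuration $\mathcal{C}$ is $P$-sufficient, because then Theorem \ref{prop_Kbig} gives that $-K_{\widetilde{X}}$ is big, and \cite[Theorem 1]{TesVarVel} gives finite generation of $\operatorname{Cox}(\widetilde{X})$.

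To prove $P$-sufficiency, I will use Remark \ref{remark:P_suff}. It is enough to check two things: every entry $g_{u\,v}$ of $\mathbf{G}_\conf$ in \eqref{eq:matrix_G_C} is nonnegative, and every diagonal entry is strictly positive. I will compute the entries through the divisors $P_{j\,m(j)}$, which are the exceptional parts of $D_{j\,m(j)}$ up to sign. Writing $\varphi:=\varphi_{j\,m(j)}$, we have:
\[
P_{j_1 m(j_1)}\cdot P_{j_2 m(j_2)}=-(\varphi_{j_1m(j_1)},\varphi_{j_2m(j_2)})_p,\qquad K_{\widetilde{X}}\cdot P_{j\,m(j)}=\sum \operatorname{mult}(\varphi_{j\,m(j)}).
\]
The first identity uses Lemma \ref{lem_for_Noether} for the intersection multiplicity of the germs; the second sum runs over the points of $\mathcal{C}$ through which the germ passes. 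Hence
\[
g_{u\,v}=9(\varphi_u,\varphi_v)_p-\Big(\sum\operatorname{mult}(\varphi_u)\Big)\Big(\sum \operatorname{mult}(\varphi_v)\Big),
\]
and the whole problem becomes a numerical one about the multiplicity sequences of the base points of an AMS pencil.

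The AMS hypothesis enters as follows. Since $C\setminus L\cong\mathbb{C}$, the strict transform of a general member of $\mathcal{P}_C$ on $X_n$ is a smooth rational curve, and its self-intersection is $0$. Adjunction then gives $K_{X_n}\cdot\widetilde{C}=-2$. Translating, this says $3d-\sum_{\ell_0}\operatorname{mult}_{p_{0\ell_0}}(\varphi_{0\,n})=2$ and $d^2=\sum\operatorname{mult}^2$. For a point $p_{j\,m(j)}$ with $j\ge1$, we have $m(j)\le 2$, so the corresponding $\varphi$ equals $\varphi_{0\,n}$ plus one or two extra multiplicity-one points. This yields closed forms:
\[
\sum\operatorname{mult}(\varphi_{j\,m(j)})=3d-2+m(j),\qquad (\varphi_{j m(j)},\varphi_{j' m'(j')})_p=d^2+\delta,
\]
where $\delta=\min(m,m')$ if $j=j'$ and $\delta=0$ otherwise. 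I would first verify the diagonal entries for these points: $9(d^2+m)-(3d-2+m)^2$. For $m\le 2$ this equals $12d-4+\dots$, which is positive. This is exactly where the bound $s_j\le 2$ is used, since for $m\ge3$ the sign becomes unclear. The off-diagonal entries among the $\mathcal{D}$-points are handled similarly; they are positive because $9d^2-(3d)^2$ vanishes at leading order and the linear terms in $d$ dominate.

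The main obstacle will be the entries that involve the points $p_{0\,\ell_0}$ of $\mathcal{B}$. Remark \ref{remark:P_suff} does not apply directly there, because $\mathcal{C}$ is not a chain: it branches at $E_n$. My approach is to apply Remark \ref{remark:P_suff} to the chain $\mathcal{B}\cup\{p_{1\,1},p_{1\,2}\}$. Its last diagonal entry is the positive quantity computed above, so all entries of the chain's $\mathbf{G}$ are positive, which covers every pair $(0\,\ell_0,\cdot)$ and every pair $(0\,\ell_0,1\,m)$. The entries $g$ between $0\,\ell_0$ and $j\,m$ for $j\ge2$ coincide with those for $j=1$, since the germs meet $\varphi_{0\,\ell_0}$ identically. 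This leaves only the already-treated entries among distinct branches of $\mathcal{D}$, so every entry of $\mathbf{G}_\conf$ is nonnegative with positive diagonal, and the conclusion follows.
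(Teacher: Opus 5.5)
Your proposal is correct and follows the paper's proof essentially step by step. You reduce to $P$-sufficiency via Remark \ref{rem:Cox_ring}, use adjunction on the AMS curve to get $\sum\operatorname{mult}=3d-2$ and $\sum\operatorname{mult}^2=d^2$, evaluate the entries indexed by $\mathcal{D}$ explicitly, and apply Remark \ref{remark:P_suff} to chains for the entries involving $\mathcal{B}$. Only minor slips need fixing: $K_{\widetilde{X}}\cdot P_{j\,m(j)}=-\sum\operatorname{mult}$, which is harmless since only products appear; the entry for $p_{j_1\,2},p_{j_2\,2}$ with $j_1\neq j_2$ is exactly $9d^2-(3d)^2=0$ rather than positive, though nonnegativity suffices; and you should use the chains $\mathcal{H}_j=\mathcal{B}\cup\{p_{j\,\ell}\}_{\ell=1}^{s_j}$ for every $j$, as the paper does, since $p_{1\,2}$ need not exist.
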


\begin{proof}
    We will show that the configuration $\mathcal{C}$ described in the statement is $P$-sufficient. Then, the result follows by Remark \ref{rem:Cox_ring}.

Let $X_n$ be the surface obtained by blowing-up at the points in $\mathcal{B}$. Set  $\widetilde{C}|_n$ the strict transform of $C$ on $X_n$. It holds that
    $$
    \widetilde{C}|_n\sim dE_{0\,0}^* - \sum_{\ell_0=1}^{s_0} \operatorname{mult}_{p_{0\,\ell_0}}(\varphi_{0\,s_0}) E_{0\,\ell_0}^*.
    $$
    In addition, since $\widetilde{C}|_n^2 = 0$, we have:
    \begin{equation}\label{eq_self_intersection_C}
        \sum_{\ell_0=1}^{s_0} \operatorname{mult}_{p_{0\,\ell_0}}(\varphi_{0\,s_0})^2 = d^2.
    \end{equation}
    Because the curve $C$ is rational and from the expression of the arithmetic genus of $\widetilde{C}|_n,\,p_a(\widetilde{C}|_n),$ we deduce that:
    $$
        0 = p_a(\widetilde{C}|_n) = 1 + \frac{1}{2} \left( \widetilde{C}|_n^2 + K_{X_n} \cdot \widetilde{C}|_n \right),
    $$
    which is equivalent to:
    \begin{equation}\label{eq_arithm_genus_C}
         \sum_{\ell_0=1}^{s_0}\operatorname{mult}_{p_{0\,\ell_0}}(\varphi_{0\,s_0}) = 3d - 2.
    \end{equation}

    Now consider the $s\times s$ matrix $\textbf{G}_{\mathcal{C}}=(g_{u\,v})_{1\leq u,v\leq s}$ defined in \eqref{eq:matrix_G_C}. According to Remark \ref{remark:P_suff}, to prove that the configuration $\mathcal{C}$ is $P$-sufficient
    it suffices to show that $g_{u\,v}\geq 0 $ and $g_{u\,u}>0$ for all $1\leq u,v\leq s$. Since $\mathcal{C}=\mathcal{B}\cup\mathcal{D},$ we partition the matrix $\mathbf{G}_\conf$ into four submatrices:
    \[
    \textbf{G}_{\conf}=\left(\begin{array}{cc}
     \textbf{G}_{\mathcal{B}}&  \textbf{G}_{\mathcal{B}}^{\mathcal{D}}\\
     (\textbf{G}_{\mathcal{B}}^{\mathcal{D}})^t & \textbf{G}_{\mathcal{D}}
    \end{array}\right),\]
    where $\textbf{G}_{\mathcal{B}}=(g_{u\,v})_{1\leq u,v\leq s_0}, \textbf{G}_{\mathcal{D}}=(g_{u\,v})_{s_0+1\leq u,v\leq s_k}$ and $\textbf{G}_{\mathcal{B}}^{\mathcal{D}}=(g_{u\,v})_{1\leq u\leq s_0}^{s_0+1\leq v\leq s_k}$.
    We begin with the submatrix $\textbf{G}_{\mathcal{B}}$ corresponding with the configuration $\mathcal{B}$. Since  $\mathcal{B}$ is a chain,  by Remark \ref{remark:P_suff} it suffices to check that the value $g_{s_0\,s_0}$ is positive. Indeed, using Equalities \eqref{eq_arithm_genus_C} and \eqref{eq_self_intersection_C},
    \begin{equation}\label{eq:g_s0_s0}
    \begin{array}{rl}
         g_{0\,s_0}^{0\,s_0} &=-9\, P_{0\,s_0} \cdot P_{0\,s_0} - (K_{\widetilde{X}} \cdot P_{0\,s_0})(K_{\widetilde{X}}\cdot P_{0\,s_0})\\
         &=9\sum_{\ell_0=1}^{s_0} \operatorname{mult}_{p_{0\,\ell_0}}(\varphi_{0\,s_0})^2-\left(\sum_{\ell_0=1}^{s_0} \operatorname{mult}_{p_{0\,\ell_0}}(\varphi_{0\,s_0})\right)^2\\[2mm]
         &=9d^2-(3d-2)^2=12d-4>0,
         \end{array}
    \end{equation}
    which yields the desired inequality.

   Next, consider the matrix $\textbf{G}_{\mathcal{D}}=(g_{u\,v})_{s_0+1\leq u,v\leq s_k}$ where $\psi(u)=j_1\,\ell_{j_1}$ and $\psi(v)=j_2\,\ell_{j_2}.$ Notice that
    by \eqref{eq:matrix_G_C} it is clear that $\textbf{G}_{\mathcal{D}}$ is symmetric. To prove that each entry of $\textbf{G}_{\mathcal{D}}$ is nonnegative we distinguish two cases: $j_1=j_2$ and $j_1\neq j_2.$ First, assume that $j_1=j=j_2$, and one can also suppose that $\ell_{j_1}=\ell_1\leq \ell_2=\ell_{j_2},$ since $\textbf{G}_{\mathcal{D}}$ is symmetric. Then,
    \begin{align*}
         g_{j\,\ell_1}^{j\,\ell_2}=&-9\, P_{j\,\ell_1} \cdot P_{j\,\ell_2} - (K_{\widetilde{X}} \cdot P_{j\,\ell_2})(K_{\widetilde{X}} \cdot P_{j\,\ell_2})\\
         =&-9\left(-\sum_{\ell_0=1}^{s_0} \operatorname{mult}_{p_{0\,\ell_0}}(\varphi_{0\,s_0})^2-\ell_1\right)\\ &\,-\left(\sum_{\ell_0=1}^{s_0}\operatorname{mult}_{p_{0\,\ell_0}}(\varphi_{0\,s_0})+\ell_1\right)\left(\sum_{\ell_0=1}^{s_0}\operatorname{mult}_{p_{0\,\ell_0}}(\varphi_{0\,s_0})+\ell_2\right)\\[2mm]
         =&\ 9(d^2+\ell_1)-(3d-2+\ell_1)(3d-2+\ell_2),
    \end{align*}
    where the first equality follows from 
    \eqref{eqn_Ds_rela} and the second one from Equalities \eqref{eq_self_intersection_C} and \eqref{eq_arithm_genus_C}.

    To consider elements in the diagonal, let $\ell_1=\ell_2=\ell$ be and then
    \begin{equation}\label{eq:entry_g_j_l}
    g_{j\,\ell}^{j\,\ell}=9(d^2+\ell)-(3d-2+\ell)(3d-2+\ell)=d(12-6\ell) -\ell^2+13\ell-4,
    \end{equation}
    which is strictly positive for any value of $d$ because $0\leq \ell\leq 2.$ The remaining considered values in the matrix are nonnegative, in fact $g_{j\,1}^{j\,2}>0$.

    To conclude our study of the entries of the matrix $\textbf{G}_{\mathcal{D}}$, we suppose $j_1\neq j_2$ and then
    \begin{align*}
        g_{j_1\,\ell_{j_1}}^{j_2\,\ell_{j_2}}=&-9\,P_{j_1\,\ell_1}\cdot P_{j_1\,\ell_1} - (K_{\widetilde{X}} \cdot P_{j_2\,\ell_{j_2}})(K_{\widetilde{X}} \cdot P_{j_2\,\ell_{j_2}})\\
        =& \ 9\sum_{\ell_0=1}^{s_0} \operatorname{mult}_{p_{0\,\ell_0}}(\varphi_{0\,s_0})^2 \\ & -\left(\sum_{\ell_0=1}^{s_0}\operatorname{mult}_{p_{0\,\ell_0}}(\varphi_{0\,s_0})+\ell_1\right)\left(\sum_{\ell_0=1}^{s_0}\operatorname{mult}_{p_{0\,\ell_0}}(\varphi_{0\,s_0})+\ell_2\right)\\[2mm]
        =& \ 9 d^2 -(3d-2+\ell_1)(3d-2+\ell_2)\\
        =& \ d(12-3(\ell_1+\ell_2)) - \ell_1\ell_2 +2 (\ell_1+\ell_2)-4,
    \end{align*}
    which is nonnegative for any $d$ because $0\leq \ell_1\leq 2$ and $0\leq \ell_2\leq 2$.

    It remains to consider the matrix $\textbf{G}_{\mathcal{B}}^{\mathcal{D}}=(g_{u\,v})_{1\leq u\leq s_0}^{s_0+1\leq v\leq s_k}$ and to show that its entries are not negative. In this case  $\psi(u)=0\,\ell_0$ and $\psi(v)=j\,\ell_j, \ 0\leq j\leq k\ \text{ and } \ 1\leq \ell_j\leq s_j\leq 2$. Firstly, we observe that one can partition $\textbf{G}_{\mathcal{B}}^{\mathcal{D}}$ as $\left(\textbf{G}_{\mathcal{B}}^{\mathcal{D}_1}|\cdots |\textbf{G}_{\mathcal{B}}^{\mathcal{D}_k}\right)$, where $\textbf{G}_{\mathcal{B}}^{\mathcal{D}_j},1\leq j\leq k,$ has entries  $g_{0\,\ell_0}^{j\,\ell_j},$ where $1\leq \ell_0\leq s_0$ and $1\leq \ell_j\leq s_j$. Now the configuration $\mathcal{C}$ can be seen as a nondisjoint union of chains $\mathcal{H}_j=\{p_{0\,\ell_0}\}_{\ell_0=1}^{s_0}\cup\{p_{j\,\ell_j}\}_{\ell_j=1}^{s_j},$ with $1\leq j\leq k$ and, by construction the submatrices $\textbf{G}_{\mathcal{B}}^{\mathcal{D}_j}$ are also submatrices of the matrix $\textbf{G}_{\mathcal{H}_j}$ corresponding with the chain $\mathcal{H}_j$. By Remark \ref{remark:P_suff}, it suffices to prove that $g_{0\, s_0}^{j\, s_j}>0$ for $1\leq j\leq k$ in order to know if all the entries of $\textbf{G}_{\mathcal{B}}^{\mathcal{D}_j}$ are positive. The positivity of these values $g_{0\, s_0}^{j\, s_j}$ follows from the positivity of those considered in Equality \eqref{eq:entry_g_j_l} when studying the matrix $\mathbf{G}_\mathcal{D}$ and since we have a chain. This completes the proof that $\mathcal{C}$ is $P$-sufficient which, by Remark \ref{rem:Cox_ring}, shows that the Cox ring $\operatorname{Cox}(\widetilde{X})$ is finitely generated.
\end{proof}

\begin{example}
The curve $C$ considered in Example \ref{ex_cusp_not} is of AMS-type. Hence, by Theorem \ref{thm:Cox_ring}, the Cox ring of the surface $\widetilde{X}$ determined by the configuration $\mathcal{C}$ in that example is finitely generated.
\end{example}

\begin{remark}\label{remark:redundant}
    Redundant blowups and its relation to the finite generation of Cox rings of rational surfaces were described in \cite{HwaPark2015,HwaPark2016}. In those papers, the authors study redundant blowups of rational surfaces with big anticanonical divisors and conclude that a redundant blowup preserves the finite generation of a Cox ring under a suitable condition. The aim of this remark is to show that configurations in this section with $g\leq 2$ do not fall within the scope of the aforementioned papers.

    It is well known that a pseudoeffective divisor $D$ on a rational projective surface $Z$ has a \emph{Zariski decomposition} $D=P_D+N_D$, where $P_D$ is a nef $\mathbb{Q}$-divisor, $N_D$ an effective $\mathbb{Q}$-divisor such that its components define a negative definite matrix and $P_D\cdot N_D=0$. Here, $P_D$ (respectively, $N_D$) is the positive (respectively, negative) part of $D$. Let $-K_Z$ be the anticanonical divisor of $Z$. Suppose that $-K_Z$ is pseudoeffective and denote by $-K_Z=P+N$ its Zariski decomposition. Let $\pi: \tilde{Z}\to Z$ be the blowup at a point $p\in Z$ and $E$ the corresponding exceptional divisor. This point is called \emph{redundant} if $\operatorname{mult}_p N\geq 1.$ In this case the blowup $\pi$ is said to be a \emph{redundant blowup} and the exceptional divisor $E$ is a \emph{redundant curve}.

    Assume that the anticanonical divisor $-K_Z$ is big. If $\pi$ is a redundant blowup, then the anticanonical divisor $-K_{\widetilde{Z}}$ is also big, although it is possible for both divisors  $-K_Z$ and $-K_{\widetilde{Z}}$ to be big without $\pi$ being redundant \cite[Remark 5.2]{HwaPark2015}. When $-K_{\widetilde{Z}}$ is a big divisor, an alternative way to see that the exceptional curve $E$ is redundant is as follows: Denoting the Zariski decomposition of $-K_{\widetilde{Z}}$ by
    $-K_{\widetilde{Z}}=\widetilde{P}+\widetilde{N} $, the blowup $\pi$ is redundant if and only if $\widetilde{P}\cdot E=0$ (see \cite[see Remark 2.3]{HwaPark2015}). 

    Returning to our situation, let $X=X_n$ be the surface obtained by blowing-up at the points of the configuration $\mathcal{B}=\{p_i\}_{i=1}^n$ of base points of a pencil $\mathcal{P}_C$ given by a curve $C$ of AMS-type. The anticanonical divisor $-K_{X}$ of $X$ satisfies $ -K_{X}\sim 3E_{0\, 0}^* -\sum_{i=1}^nE_{0\,i}$. In addition, as a consequence of Theorem \ref{thm_Campillo} (or Proposition \ref{prop_dual} and Theorem \ref{thm_conos}), the cone of curves $\operatorname{NE}(X)$ is spanned by the classes of the strict transforms of the projective line at infinity and the exceptional divisors, while the nef cone $\operatorname{Nef}(X)$ is spanned by the classes of the previously introduced divisors $D_{0\,m(0)},0\leq m(0)\leq n$.

    Thus, when $g\leq 2$ (meaning that the dual graph of $\mathcal{B}$ has 1 or 2 subgraphs), the Zariski decomposition of the anticanonical divisor $ -K_{X}=P_{-K_{X}}+N_{-K_{X}}$ is:
    $$
    P_{-K_{X}}=\left(3-\dfrac{(D_{0\,n}\cdot E_{0\,0}^*)\left(D_{0\,n}\cdot(\sum_{\ell_0=1}^nE_{0\, \ell_0}^*)\right)}{D_{0\,n}\cdot P_{0\,n}} \right)E_{0\,0}^*+\dfrac{D_{0\,n}\cdot(\sum_{\ell_0=1}^nE_{0\, \ell_0}^*)}{D_{0\,n}\cdot P_{0\,n}}D_{0\,n}
    $$
    and
    $$
    N_{-K_{X}}=\sum_{i=1}^{n-1}\left(\dfrac{(D_{0\,n}\cdot P_{0\,i})(D_{0\,n}\cdot(\sum_{\ell_0=1}^nE_{0\, \ell_0}^*))}{D_{0\,n}\cdot P_{0\,n}}-D_{0\,i}\cdot\left(\sum_{\ell_0=1}^nE_{0\, \ell_0}^*\right)\right)\widetilde{E}_{0\,i}.
    $$
    The proof is based on local properties of the AMS-type curves and it is omitted.

    The point $p_{1\,1}\in E_{0\,n}$ of any configuration $\conf$ as described in this section is not redundant. Indeed, if $p_{1\,1}$ were redundant, then $\operatorname{mult}_{p_{1\,1}}(N_{-K_{X}})\geq 1$ and then $p_{1\,1}\in E_{0\,n-1}$. However, the point $p_{1\,1}\in\conf$ is free, a contradiction.

    We conclude by presenting the Zariski decomposition of the anticanonical divisor $-K_{X_6}$ corresponding to the surface $X_6$ obtained by blowing-up at the configuration $\mathcal{B}=\{p_{0\,\ell_0}\}_{\ell_0=1}^6$ in Example \ref{ex_cusp_not}:
    $$
    -K_{X_6}=\dfrac{2}{3}E^*_{0\,0} + \dfrac{7}{9}D_{0\,n} +\dfrac{5}{9}\widetilde{E}_{0\,1}+\dfrac{1}{3}\widetilde{E}_{0\,2}+\dfrac{2}{3}\widetilde{E}_{0\,3}+\dfrac{4}{9}\widetilde{E}_{0\,4}
    +\dfrac{2}{9}\widetilde{E}_{0\,5}.
    $$
\end{remark}

\section*{Conflict of interest}
The authors declare they have no conflict of interest.

\bibliographystyle{plain}
\bibliography{MIBIBLIO}

@Article{ArtLaf,
author = {M. Artebani and A. Laface},
title = {Cox rings of surfaces and anticanonical {I}itaka dimension},
journal = {Adv. Math.},
year = {2011},
OPTkey = {•},
volume = {226},
OPTnumber = {•},
pages = {5252-5267},
OPTmonth = {•},
OPTnote = {•},
OPTannote = {•}
}

@Article{ArtPer,
author = {M. Artebani and S. P\'erez-Garbayo},
title = {Cox rings of nef anticanonical rational surfaces},
journal = {Adv. Math.},
year = {2025},
OPTkey = {•},
volume = {475},
OPTnumber = {•},
pages = {110328},
OPTmonth = {•},
OPTnote = {•},
OPTannote = {•}
}

@book{ArzDerHauLaf,
author={I. Arzhantsev and U. Derenthal and J. Hausen and A. Laface},
title={Cox Rings},
place={Cambridge},
series={Cambridge Studies in Advanced Mathematics},
publisher={Cambridge University Press},
year={2014},
collection={Cambridge Studies in Advanced Mathematics}}

@article {BirCasHacMcKer,
AUTHOR = {C. Birkar and P. Cascini and C.D. Hacon and J. McKernan},
TITLE = {Existence of minimal models for varieties of log general type},
JOURNAL = {J. Amer. Math. Soc.},
FJOURNAL = {},
VOLUME = {23},
YEAR = {2010},
NUMBER = {2},
PAGES = {405--468},
ISSN = {},
MRCLASS = {},
MRNUMBER = {},
MRREVIEWER = {},
DOI = {},
URL = {},
}

@article {B,
AUTHOR = {R.I. Bogdanov},
TITLE = {Bifurcation of the limit cycle of a family of plane vector fields},
JOURNAL = {Selecta Math. Soviet},
VOLUME = {1},
YEAR = {1981},
NUMBER = {},
PAGES = {373--387},
}

@article{Bre,
  title={Semigroups corresponding to algebroid branches in the plane},
  author={Bresinsky, H.},
  journal={Proc. Amer. Math. Soc.},
  pages={381--384},
  year={1972},
}

@incollection {CamGonz,
    AUTHOR = {A. Campillo and G. Gonz\'{a}lez-Sprinberg},
TITLE = {On characteristic cones, clusters and chains of infinitely near points},
BOOKTITLE = {Singularities ({O}berwolfach, 1996)},
SERIES = {Progr. Math.},
VOLUME = {162},
PAGES = {251--261},
PUBLISHER = {Birkh\"{a}user, Basel},
YEAR = {1998},
}

@Article{CamPilReg-2002,
author = {A. Campillo and O. Piltant and A.J. Reguera},
title = {Cones of curves and of line-bundles on surfaces associated with curves having one place at infinity},
journal = {Proc. London Math. Soc.},
year = {2002},
volume = {84},
pages = {559--580}
}

@Book{Cas,
author = {E. Casas-Alvero},
title = {{S}ingularities of {P}lane {C}urves},
publisher = {Cambridge Univ. Press},
year = {2000},
OPTkey = {•},
volume = {276},
OPTnumber = {•},
series = {London Math. Soc. Lecture Note Ser.},
OPTaddress = {•},
OPTedition = {•},
OPTmonth = {•},
OPTnote = {•},
OPTannote = {•}
}

@article {Casc2021,
    AUTHOR = {C. Cascini},
     TITLE = {New directions in the {M}inimal {M}odel {P}rogram},
   JOURNAL = {Boll. Unione Mat. Ital. },
  FJOURNAL = {Bollettino dell'Unione Matematica Italiana},
    VOLUME = {14},
      YEAR = {2021},
     PAGES = {179--190},
      }

@Article{CL,
author = {C.~Christopher and J.~Llibre},
title = {Algebraic aspects of integrability for polynomial systems},
journal = {Qual. Th. Planar Differ. Equ.},
year = {1999},
OPTkey = {•},
volume = {1},
OPTnumber = {•},
pages = {71--95},
OPTmonth = {•},
OPTnote = {•},
OPTannote = {•}
}

@Article{Cox,
author = {D. Cox},
title = {The homogeneous coordinate ring of a toric variety},
journal = {J. Algebraic Geom.},
year = {1995},
volume = {4},
pages = {17--50}
}

@Article{RosaFrisMus2017,
author = {B.L. de la Rosa-Navarro and J.B. Fr\'{\i}as-Medina and  M. Lahyane},
title = {Rational surfaces with finitely generated {C}ox rings and very high {P}icard numbers},
journal = {{RACSAM}},
year = {2017},
volume = {111},
OPTnumber = {•},
pages = {297--306},
OPTmonth = {•},
OPTnote = {•},
OPTannote = {•}
}

@article {RosaFrisMus2023,
    AUTHOR = {B.L. de la Rosa-Navarro and J.B. Fr\'{i}as-Medina
              and M. Lahyane},
     TITLE = {On the effective, nef, and semi-ample monoids of blowups of {H}irzebruch surfaces at collinear points},
   JOURNAL = {Canad. Math. Bull.},
  FJOURNAL = {Canadian Mathematical Bulletin},
  publisher={Canadian Mathematical Society},
    VOLUME = {66},
      YEAR = {2023},
    NUMBER = {4},
     PAGES = {1179--1193},
      OPTISSN = {},
   OPTMRCLASS = {},
  OPTMRNUMBER = {},
OPTMRREVIEWER = {},
       OPTDOI = {},
       OPTURL = {},}

@Article{D,
author = {F. Dumortier},
title = {Singularities of vector fields on the plane},
journal = {J. Differential Equations},
year = {1977},
volume = {23},
pages = {53--106},
OPTmonth = {•},
OPTnote = {•},
OPTannote = {•}
}

@Article{F,
author = {R. A. Fisher},
title = {The wave of advance of advantageous genes},
journal = {Ann. Eugenics},
year = {1937},
OPTkey = {•},
volume = {7},
OPTnumber = {•},
pages = {355--369},
OPTmonth = {•},
OPTnote = {•},
OPTannote = {•}
}

@Article{FriLahy2021,
author = {J.B. Fr\'{\i}as-Medina and M. Lahyane},
title = {The effective monoids of the blow-ups of {H}irzebruch sufaces at points in general position},
journal = {Rend. Circ. Mat. Palermo, (2). Ser.},
year = {2021},
OPTkey = {•},
volume = {70},
OPTnumber = {},
pages = {167--197},
OPTmonth = {•},
OPTnote = {•},
OPTannote = {•},
}

@Article{FulMur,
author = {M. Fulger and T. Murayama},
title = {New constructions of nef classes on self-products
of curves},
journal = {Math. Z.},
year = {2022},
OPTkey = {•},
volume = {302},
OPTnumber = {•},
pages = {1239--1265},
OPTmonth = {•},
OPTnote = {•},
OPTannote = {•}
}

@incollection {Ful2004,
AUTHOR = {W. Fulton},
TITLE = {Adjoints and {M}ax {N}oether's {F}undamentalsatz},
BOOKTITLE = {Algebra, arithmetic and geometry with applications ({W}est {L}afayette, {IN}, 2000)},
PAGES = {301--313},
PUBLISHER = {Springer, Berlin},
YEAR = {2004},
}

@book {Ful2,
AUTHOR = {W. Fulton},
TITLE = {Introduction to {T}oric {V}arieties},
SERIES = {Ann. of Math. Stud.},
VOLUME = {131},
OPTNOTE = {The William H. Roever Lectures in Geometry},
PUBLISHER = {Princeton Univ. Press, Princeton, NJ},
YEAR = {1993},
OPTPAGES = {xii+157},
OPTISBN = {0-691-00049-2},
OPTMRCLASS = {14M25 (14-02 14J30)},
OPTMRNUMBER = {1234037},
OPTMRREVIEWER = {T. Oda},
OPTDOI = {10.1515/9781400882526},
OPTURL = {https://doi.org/10.1515/9781400882526},
}

@article {GalMon2004,
AUTHOR = {C. Galindo and F. Monserrat},
TITLE = {On the cone of curves and of line bundles of a rational
              surface},
JOURNAL = {Int. J. Math.},
FJOURNAL = {International Journal of Mathematics},
VOLUME = {15},
YEAR = {2004},
NUMBER = {4},
PAGES = {393--407},
}

@Article{GalMonMor2025-RAC,
author = {C. Galindo and F. Monserrat and C.J. Moreno-\'Avila},
title = {The cone of curves and the {C}ox ring of rational
surfaces over {H}irzebruch surfaces.},
journal = {Rev. Real Acad. Cienc. Exactas
Fis. Nat. Ser. A-Mat.},
year = {2025},
OPTkey = {•},
volume = {119},
OPTnumber = {•},
pages = {92},
OPTmonth = {•},
OPTnote = {•},
OPTannote = {•}
}

@article {GalMon2005,
AUTHOR = {C. Galindo and F. Monserrat},
TITLE = {The cone of curves associated to a plane configuration},
JOURNAL = {Comment. Math. Helv.},
FJOURNAL = {Commentarii Mathematici Helvetici. A Journal of the Swiss
              Mathematical Society},
VOLUME = {80},
YEAR = {2005},
NUMBER = {1},
PAGES = {75--93},
}

@Article{GalMon2005-JA,
author = {C. Galindo and F. Monserrat},
title = {The total coordinate ring of a smooth projective surface},
journal = {J. Algebra},
year = {2005},
OPTkey = {•},
volume = {284},
OPTnumber = {•},
pages = {91--101},
OPTmonth = {•},
OPTnote = {•},
OPTannote = {•}
}

@Article{GalMon2016,
AUTHOR = {C. Galindo and F. Monserrat},
TITLE = {The cone of curves and the {C}ox ring of rational surfaces given by divisorial valuations},
JOURNAL = {Adv. Math.},
FJOURNAL = {•},
VOLUME = {290},
YEAR = {2016},
NUMBER = {2016},
PAGES = {1040--1061},
ISSN = {},
MRCLASS = {•},
MRNUMBER = {•},
MRREVIEWER = {•},
DOI = {},
URL = {},
}

@Article{G,
AUTHOR = {I.A. Garc\'\i a},
TITLE = {Transcendental Limit Cycles Via the Structure of Arbitrary Degree Invariant Algebraic Curves of Polynomial Planar Vector Fields},
JOURNAL = {Rocky Mountain J. Math.},
FJOURNAL = {•},
VOLUME = {35},
YEAR = {2005},
NUMBER = {•},
PAGES = {501-515},
ISSN = {•},
MRCLASS = {•},
MRNUMBER = {•},
MRREVIEWER = {•},
DOI = {•},
URL = {https://arxiv.org/abs/2105.01483},
}

@article {HacMck,
    AUTHOR = {Hacon, C. D. and McKernan, J.},
     TITLE = {Existence of minimal models for varieties of log general type.
              {II}},
   JOURNAL = {J. Amer. Math. Soc.},
  FJOURNAL = {Journal of the American Mathematical Society},
    VOLUME = {23},
      YEAR = {2010},
    NUMBER = {2},
     PAGES = {469--490},
}

@article {HarbLahy,
    AUTHOR = {Harbourne, B. and Lahyane, M.},
     TITLE = {Irreducibility of {$-1$}-classes on anticanonical rational
              surfaces and finite generation of the effective monoid},
   JOURNAL = {Pacific J. Math.},
  FJOURNAL = {Pacific Journal of Mathematics},
    VOLUME = {218},
      YEAR = {2005},
    NUMBER = {1},
     PAGES = {101--114},
}

@article {HauKeiLaf,
    AUTHOR = {J. Hausen and S. Keicher and A. Laface},
     TITLE = {Computing {C}ox rings},
   JOURNAL = {Math. Comp.},
  FJOURNAL = {Mathematics of Computation},
    VOLUME = {85},
      YEAR = {2016},
    NUMBER = {297},
     PAGES = {467--502},
      }

@Article{HuKe,
author = {Y. Hu and S. Keel},
title = {Mori dream spaces and {GIT}},
journal = {Michigan Math. J.},
year = {2000},
OPTkey = {•},
volume = {48},
OPTnumber = {•},
pages = {331--348},
OPTmonth = {•},
OPTnote = {•},
OPTannote = {•}
}

@article {HwaPark2015,
    AUTHOR = {Hwang, D. and Park, J.},
     TITLE = {Redundant blow-ups of rational surfaces with big anticanonical
              divisor},
   JOURNAL = {J. Pure Appl. Algebra},
  FJOURNAL = {Journal of Pure and Applied Algebra},
    VOLUME = {219},
      YEAR = {2015},
    NUMBER = {12},
     PAGES = {5314--5329},
     }

@article {HwaPark2016,
    AUTHOR = {Hwang, D. and Park, J.},
     TITLE = {Cox rings of rational surfaces and redundant blow-ups},
   JOURNAL = {Trans. Amer. Math. Soc.},
  FJOURNAL = {Transactions of the American Mathematical Society},
    VOLUME = {368},
      YEAR = {2016},
    NUMBER = {11},
     PAGES = {7727--7743},
      }

@article {Kaw,
AUTHOR = {Y. Kawamata},
TITLE = {The cone of curves of algebraic varieties},
JOURNAL = {Ann. Math. (2)},
FJOURNAL = {Annals of Mathematics. Second Series},
VOLUME = {119},
YEAR = {1984},
NUMBER = {3},
PAGES = {603--633},
OPTISSN = {0003-486X},
OPTMRCLASS = {14E05 (14E30 14J30)},
OPTMRNUMBER = {744865},
OPTMRREVIEWER = {Ulf Persson},
OPTDOI = {10.2307/2007087},
OPTURL = {https://doi.org/10.2307/2007087},
}

@Book{KolMor,
author = {J. Kollar and S. Mori},
title = {{B}irational {G}eometry of {A}lgebraic {V}arieties},
publisher = {Cambridge University Press, Cambridge},
year = {1998},
OPTkey = {•},
volume = {134},
OPTnumber = {•},
series = {Cambridge Tracts in Math.},
OPTaddress = {•},
OPTedition = {•},
OPTmonth = {•},
OPTnote = {•},
OPTannote = {•}
}

@Article{LafUga,
author = {A. Laface and L. Ugaglia},
title = {On Blowing Up Minimal
Toric Surfaces},
journal = {International Mathematics
Research Notices},
year = {2025},
OPTkey = {•},
volume = {2025},
number = {6},
pages = {rnaf052},
OPTmonth = {•},
OPTnote = {•},
OPTannote = {•}
}

@Article{LafVel,
author = {A. Laface and M. Velasco},
title = {A survey on {C}ox rings},
journal = {Geom. Dedicata},
year = {2009},
OPTkey = {•},
volume = {139},
OPTnumber = {•},
pages = {269--287},
OPTmonth = {•},
OPTnote = {•},
OPTannote = {•}
}

@Book{Laz1,
author = {R. Lazarsfeld},
title = {Positivity in {A}lgebraic {G}eometry {I}. {C}lassical {S}etting: {L}ine {B}undles and {L}inear {S}eries},
publisher = {Springer-Verlag, Berlin},
year = {2004},
OPTkey = {•},
volume = {48},
OPTnumber = {•},
OPTseries = {Ergebnisse der Mathematik und ihrer Grenzgebiete. 3. Folge. A Series of Modern Surveys in Mathematics [Results in Mathematics and Related Areas. 3rd Series. A Series of Modern Surveys in Mathematics]},
OPTaddress = {•},
OPTedition = {•},
OPTmonth = {•},
OPTnote = {},
OPTannote = {•}
}

@incollection {Lip1994,
    AUTHOR = {Lipman, Joseph},
     TITLE = {Proximity inequalities for complete ideals in two-dimensional
              regular local rings},
 BOOKTITLE = {Commutative algebra: syzygies, multiplicities, and birational
              algebra ({S}outh {H}adley, {MA}, 1992)},
    SERIES = {Contemp. Math.},
    VOLUME = {159},
     PAGES = {293--306},
 PUBLISHER = {Amer. Math. Soc., Providence, RI},
      YEAR = {1994},
      ISBN = {0-8218-5188-8},
   MRCLASS = {13H05 (13F30 13H15)},
  MRNUMBER = {1266187},
MRREVIEWER = {Bernard\ L.\ Johnston},
       DOI = {10.1090/conm/159/01512},
       URL = {https://doi.org/10.1090/conm/159/01512},
}

@article {Moh,
    AUTHOR = {Moh, T. T.},
     TITLE = {On analytic irreducibility at {$\infty $} of a pencil of
              curves},
   JOURNAL = {Proc. Amer. Math. Soc.},
  FJOURNAL = {Proceedings of the American Mathematical Society},
    VOLUME = {44},
      YEAR = {1974},
     PAGES = {22--24},
  }

@article{Mor,
author = {S. Mori},
title={Threefolds whose canonical bundles are not numerically effective},
Journal={Ann. Math.},
Volume= {116},
Number={},
Pages={133--176},
Year={1982}
}

@Article{Nag,
author = {M. Nagata},
title = {On the 14-th problem of {H}ilbert},
journal = {Amer. J. Math.},
year = {1959},
volume = {81},
pages = {766--772},
OPTmonth = {•},
OPTnote = {•},
OPTannote = {•}
}

@Article{O,
author = {K. Odani},
title = {The limit cycle of the van der Pol Equation is Not Algebraics},
journal = {J. Differential Equations},
year = {1995},
OPTkey = {•},
volume = {115},
OPTnumber = {•},
pages = {146--152},
OPTmonth = {•},
OPTnote = {•},
OPTannote = {•}
}

@article{Ott,
author={J. Ottem},
title={On the Cox ring of $\mathbf{P}^2$
blown up in points on a line},
volume={109},
journal={Math. Scan},
year={2011},
pages={22--30}
}

@Article{T,
author = {F. Takens},
title = {Forced oscillations and bifurcations, Applications of global analysis I},
journal = {Common Math. Inst. Rijksuniversitat Utrecht},
year = {1974},
OPTkey = {•},
volume = {3},
OPTnumber = {•},
pages = {1--59},
OPTmonth = {•},
OPTnote = {•},
OPTannote = {•}
}

@Article{TesVarVel,
author = {D. Testa and A. V\'arilly-Alvarado and M. Velasco},
title = {Big rational surfaces},
journal = {Math. Ann.},
year = {2011},
OPTkey = {•},
volume = {351},
OPTnumber = {•},
pages = {95--107},
OPTmonth = {•},
OPTnote = {•},
OPTannote = {•}
}

@Article{V,
author = {C. Valls},
title = {Invariant algebraic surfaces for generalized Raychaudhuri equations},
journal = {Commun. Math. Phys.},
year = {2011},
volume = {308},
pages = {133--146}
}

@book{Zar06,
  title={The moduli problem for plane branches},
  author={O. Zariski},
  year={2006},
  publisher={University Lecture Series. American Mathematical Soc.},
  volume = {39},
}
\end{document}